\documentclass[12pt]{amsart}

\usepackage{graphicx, amssymb, mathrsfs}
\usepackage{enumerate}

\newtheorem{theorem}{Theorem}[section]

\newtheorem{lemma}[theorem]{Lemma}

\newtheorem*{ntheorem}{\bf Main Theorem}
\newtheorem*{ncorollary}{\bf Corollary}

\theoremstyle{definition}

\newcommand{\C}{\mathbb{C}}
\newcommand{\D}{\mathbb{D}}

\newcommand{\CC}{\mathscr{C}}

\newcommand{\N}{\mathbb{N}}

\newcommand{\id}{\operatorname{id}}

\DeclareMathOperator{\diam}{diam}
\DeclareMathOperator{\dist}{dist}

\title{A Rudin-Carleson theorem with uniform approximation for manifold-valued maps}

\author{Benedikt Steinar Magnússon}
\author{Tyson Ritter}
\address{B. Magnússon, Science Institute, University of Iceland, Iceland} 
\email{bsm@hi.is}
\address{T. Ritter, Department of Mathematics and Physics, University of Stavanger, Norway} 
\email{tyson.ritter@uis.no}

\subjclass[2020]{32H02 (Primary), 30E10, 30E25, 32Q56 (Secondary)}

\keywords{Rudin-Carleson interpolation, Runge approximation, holomorphic interpolation, Oka manifold, analytic disc.}

\begin{document}

\begin{abstract}
	Given a closed set $E \subset \partial {\mathbb D}$ of measure zero and a continuous function $\varphi : E \to {\mathbb C}$, the classical Rudin-Carleson interpolation theorem states that there exists a continuous function $F : \overline {\mathbb D} \to {\mathbb C}$ that is holomorphic on ${\mathbb D}$ and satisfies $F\rvert_E = \varphi$. In this paper we obtain a generalisation of the Rudin-Carleson theorem for maps $\varphi : E \to X$ into arbitrary connected complex manifolds $X$ that combines interpolation of $\varphi$ on $E$ with uniform approximation on compact subsets of $\overline {\mathbb D} \setminus E$ of another given continuous map $f:\overline {\mathbb D} \to X$ that is holomorphic on ${\mathbb D}$. Under the further assumption that $X$ is an Oka manifold we obtain a corollary that combines Rudin-Carleson interpolation of a continuous map $\varphi : \overline {\mathbb D} \to X$ on $E$ with Runge approximation of $\varphi$ on a compact set $K\subset {\mathbb D}$ without any holes on which $\varphi$ is holomorphic.
\end{abstract}

\maketitle

\section{Introduction and main results}

\noindent
Throughout this paper we denote by $S$ the unit circle $\partial \D \subset \C$. We begin by recalling the classical Rudin-Carleson theorem.

\begin{theorem}[Rudin-Carleson \cite{Carleson1957, Rudin1956}]
	\label{thm_Rudin}
	Let $E \subset S$ be a closed set of measure zero and let $\varphi : E \to \C$ be a continuous function. Suppose that $\varphi(E) \subset T$, where $T \subset \C$ is homeomorphic to $\overline \D$. Then, there exists a continuous function $f : \overline \D \to \C$ that is holomorphic on $\D$ such that $f(z) = \varphi(z)$ for all $z \in E$ and which satisfies $f(\overline \D) \subset T$.
\end{theorem}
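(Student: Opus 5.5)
The plan has two stages. First I prove the case $T=\overline\D$, namely an interpolating disc-algebra function bounded by $1$. Then I transport this to a general $T$ using a Riemann map that extends to the boundary.

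\emph{Reduction to $T=\overline\D$.} Since $T\subset\C$ is homeomorphic to $\overline\D$, the Jordan curve theorem and the Schoenflies theorem show that $T$ is the closure of a Jordan domain $\Omega$ with $\partial\Omega$ a Jordan curve. By Carathéodory's theorem, a Riemann map $g:\D\to\Omega$ extends to a homeomorphism $g:\overline\D\to T$. Put $\psi=g^{-1}\circ\varphi:E\to\overline\D$, which is continuous. Suppose the disc case gives $h\in C(\overline\D)$, holomorphic on $\D$, with $h|_E=\psi$ and $|h|\le 1$. By the maximum principle, either $h(\D)\subset\D$, or $h$ is a unimodular constant. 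In both cases $f=g\circ h$ is continuous on $\overline\D$ and holomorphic on $\D$. Moreover $f|_E=\varphi$ and $f(\overline\D)\subset T$. This reduction is routine.

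\emph{The disc case.} I first use that $E$ has measure zero to build a peak function. Take open sets $U_n\supset E$ in $S$ with $|U_n|<2^{-n}$, and let $u=\sum_n u_n$ with $u_n$ continuous, $u_n=n$ near $E$ and $u_n=0$ off $U_n$, arranged so that $u\in L^1$ and $u=+\infty$ exactly on $E$. The outer function with modulus $e^{-u}$, suitably normalised via $(1-\cdot)$ or Fatou's construction, gives $p\in A(\D)$ with $p=1$ on $E$ and $|p|<1$ on $\overline\D\setminus E$.

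The key one-step lemma is as follows. For continuous $\chi$ on $E$ and $\varepsilon>0$, there is $F\in A(\D)$ with $\|F-\chi\|_E<\varepsilon$ and $\|F\|_{\overline\D}\le\|\chi\|_E+\varepsilon$. To prove it, extend $\chi$ to $S$ without increasing the sup norm and approximate it by an analytic-type polynomial $P$ on a neighbourhood $V$ of $E$. One can use, for example, Mergelyan on the arc-complement, or multiply a trigonometric polynomial by $p^N$ after conjugate cancellation. Then take $F=Pp^N$ with $N$ large: near $E$ we have $|p|\le1$ and $P\approx\chi$, while off $V$ the factor $p^N$ kills $P$.

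Iterating with $\varepsilon_k=2^{-k}\varepsilon$ on the residuals $\chi_k=\psi-\sum_{j<k}F_j$ yields $f=\sum F_k\in A(\D)$ with $f|_E=\psi$ and $\|f\|\le\|\psi\|_E+2\varepsilon$.

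\emph{Main obstacle: the exact bound $|f|\le1$.} The naive iteration only gives $\|f\|\le 1+2\varepsilon$, and this is not enough: rescaling by $(1+2\varepsilon)^{-1}$ would destroy the interpolation on $E$. The first thing I would try is to control the partial sums pointwise rather than just the individual terms, in the spirit of Bishop's refinement. At stage $k$, the region where $|S_{k-1}|$ is close to $1$ is a small neighbourhood of $E$, where $S_{k-1}\approx\psi$. On that region I choose the correction $F_k=P_kp^{N_k}$ so that $|S_{k-1}+F_k|\le 1-\delta_k$ off a thinner neighbourhood of $E$ and $|S_{k-1}+F_k|\le 1$ everywhere. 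This uses the strict inequality $|p|<1$ off $E$ together with a radial retraction of the target values of $P_k$ into the disc.

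If this bookkeeping proves too delicate, the fallback is to cite Rudin's original Bishop-type argument directly, since this is precisely the exact-norm form of his theorem.
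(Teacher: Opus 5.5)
This statement is not proved in the paper; it is the classical theorem, quoted from Carleson and Rudin and used as a black box. Your reduction to $T=\overline\D$ via Carath\'eodory's theorem is correct. The Jordan curve theorem plus invariance of domain already show that $T$ is a closed Jordan domain. Your peak-function iteration does produce $h\in A(\D)$ with $h|_E=\psi$ and $\|h\|\le 1+2\varepsilon$.

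\textbf{The gap.} The exact bound $|h|\le 1$ is missing, and it is the only part of the disc case that matters here. Without it $g\circ h$ is not even defined. Moreover, in the proof of Theorem \ref{thm_RudinCarleson} the result is applied with $\tilde\varphi(E)\subset S$, i.e.\ with every interpolated value on $\partial T$. Your sketched repair does not supply a mechanism, for two reasons.
\begin{enumerate}[(i)]
\item Near $E$ inside $\D$, the factor $p^{N_k}$ takes values of modulus close to $1$ whose argument is not controlled. So $S_{k-1}+P_kp^{N_k}$ is not a convex combination of $S_{k-1}$ and $S_{k-1}+P_k$. Radially retracting the values of $S_{k-1}+P_k$ into the disc therefore says nothing about $|S_{k-1}+P_kp^{N_k}|$.
\item Aiming at $\psi$ itself cannot work. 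At a point $z_0\in E$ with $|\psi(z_0)|=1$ one has $|\psi(z_0)-S_{k-1}(z_0)|\ge 1-|S_{k-1}(z_0)|$, so the residual never fits into the room left by $S_{k-1}$.
\end{enumerate}
Falling back on the citation is exactly what the paper does. But then your disc-case argument is superfluous rather than a proof.

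\textbf{The missing idea.} You need a pointwise-weighted form of your one-step lemma, combined with targets pulled strictly inside the disc.

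\emph{Weighted lemma:} if $\rho>0$ is continuous on $\overline\D$ and $g\in C(E)$ satisfies $|g|<\rho$ on $E$, then for each $\eta>0$ there is $G\in A(\D)$ with $\|G-g\|_E<\eta$ and $|G|<\rho$ on $\overline\D$. To prove it, use Mergelyan's theorem to take a polynomial $P$ with $\|P-g\|_E$ so small that $|P|<\rho$ on $E$. Then $|P|<\rho$ on a relatively open $V\supset E$ in $\overline\D$. Now $G=Pp^N$ works once $\|P\|_{\overline\D}\max_{\overline\D\setminus V}|p|^N<\min_{\overline\D}\rho$.

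\emph{Iteration:} put $r_k=1-2^{-k}$, $\eta_k=2^{-k-3}$, $S_0=0$, and maintain $|S_k|<1$ on $\overline\D$ and $\|S_k-r_k\psi\|_E<\eta_k$. Given $S_k$, apply the lemma to $g=r_{k+1}\psi-S_k$ with $\rho=\min\{1-|S_k|,\,c_{k+1}\}$, where $c_{k+1}=r_{k+1}-r_k+\eta_k$, and with $\eta=\eta_{k+1}$. The hypothesis holds because on $E$ we have $|g|<c_{k+1}$ and $|g|+|S_k|<r_{k+1}+2\eta_k<1$. Then $S_{k+1}=S_k+G$ satisfies $|S_{k+1}|\le|S_k|+|G|<1$. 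The corrections are bounded by the summable constants $c_k$, so the uniform limit $h$ exists and satisfies $h|_E=\psi$ and $|h|\le 1$.

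\textbf{Smaller repairs to the peak function.} First, $u$ must be smooth, not merely continuous, off $E$, with the sum locally finite there; otherwise $\tilde u$ need not be continuous on $S\setminus E$. Second, $1-e^{-(u+i\tilde u)}$ need not have modulus at most $1$. Fatou's $H/(1+H)$ with $H=u+i\tilde u$ does give $p=1$ on $E$ and $|p|<1$ elsewhere.
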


Since its publication, this result has received various extensions and generalisations in the literature. To mention a few, Bishop \cite{Bishop1962} gave a generalisation of this result to closed subspaces of continuous complex-valued functions on compact Hausdorff spaces. Later, in a series of papers starting with \cite{Globevnik1975}, Globevnik studied various generalisations of the Rudin-Carleson theorem in the case of maps into complex Banach spaces. More recently, Izzo \cite{Izzo2018} has shown that it is possible to combine Rudin-Carleson interpolation with Pick interpolation at finitely many interior points of the unit disc. Finally, using different techniques than those in the current paper Brudnyi \cite{Brudnyi2022} obtained a version of the Rudin-Carleson theorem for manifold-valued maps, but his results do not include uniform approximation on compact subsets of $\overline \D \setminus E$, in contrast with our main result.

Our main theorem is as follows. It appears to be the first instance in which Rudin-Carleson interpolation has been combined with uniform approximation.

\begin{ntheorem}[Theorem \ref{thm_main_generaltarget}]
	Let $X$ be a connected complex manifold equipped with a distance function induced by a complete Riemannian metric.	Let $f : \overline \D \to X$ be a continuous map that is holomorphic on $\D$. Let $E \subset S$ be a closed set of measure zero and let $\varphi : E \to X$ be a continuous map. Let $c \in \D$. Then, $f$ can be uniformly approximated on compact subsets of $\overline \D \setminus E$ by continuous maps $F:\overline \D \to X$ that are holomorphic on $\D$, such that $F\vert_E = \varphi$ and $F(c) = f(c)$. Furthermore, if $U \subset X$ is any connected neighbourhood of $f(\overline \D) \cup \varphi(E)$ then it is possible to ensure that $F(\overline \D) \subset U$.
\end{ntheorem}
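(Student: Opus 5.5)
We plan to construct $F$ as the limit of a sequence of continuous maps $F_n:\overline\D\to X$, each holomorphic on $\D$, that progressively correct the boundary values on $E$. Set $F_0=f$. Since $E$ has measure zero, for every $\epsilon>0$ we can cover $E$ by finitely many disjoint closed arcs $I_1,\dots,I_m\subset S$ of small total length. The compact subsets of $\overline\D\setminus E$ are exhausted by $K_n=\{z\in\overline\D:\dist(z,E)\ge 1/n\}$. At step $n$ we want $F_n$ with
\[
d\bigl(F_n(z),\varphi(z)\bigr)<\epsilon_n \ (z\in E),\qquad d(F_n,F_{n-1})<\delta_n \text{ on } K_n\cup\{c\},\qquad F_n(c)=f(c),
\]
with $\epsilon_n,\delta_n$ summable. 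We also need uniform control near $E$, so that the limit is continuous on $\overline\D$ and equals $\varphi$ on $E$. The natural estimate is $d(F_n,F_{n-1})\le C\epsilon_{n-1}$ on a neighbourhood of $E$ in $\overline\D$. We will then make the $F_n$ converge uniformly on all of $\overline\D$.

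For a single step we use the manifold structure as follows. The graph of $F_{n-1}$ over $\overline\D$ is a compact continuous disc, holomorphic inside. Because $\varphi$ is continuous and $F_{n-1}$ is already $\epsilon_{n-1}$-close to $\varphi$ on $E$, for $n\ge 2$ the correction needed is small. We therefore work in a holomorphic "spray" around the disc $F_{n-1}$: a continuous map $s:\overline\D\times B\to X$, holomorphic on $\D\times B$ with $B\subset\C^N$ a ball, such that $s(z,0)=F_{n-1}(z)$ and $\partial_w s(z,0)$ is surjective. Such a spray is built from finitely many holomorphic vector fields along the disc, for instance by applying Siu's Stein-neighbourhood theorem to the graph in $\C\times X$, or by the standard construction of dominating sprays over discs. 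The implicit function theorem with parameters then turns the problem into a $\C^N$-valued one. We must find $g:\overline\D\to B$, continuous and holomorphic in $\D$, with $g$ small on $K_n\cup\{c\}$, $g(c)=0$, and $s(z,g(z))$ approximately equal to $\varphi(z)$ on $E$. For the linear problem, first solve pointwise for continuous $h:E\to B$ with $s(z,h(z))=\varphi(z)$. Then apply the classical Theorem~\ref{thm_Rudin} (componentwise, and with its bound on the image) to get $G$ with $G|_E=h$ and $|G|\le\sup|h|$. Multiply by a peak-type function $P$ that equals $1$ on $E$, is holomorphic, is tiny on $K_n\cup\{c\}$, and has $|P|\le 1$. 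Such $P$ exist because $E$ is a peak-interpolation set of measure zero, for example $P=\exp(-M(1-q))$ with $q$ a peak function for $E$. Finally correct the value at $c$ with a factor such as $(z-c)$ terms. The nonlinearity is handled by a Newton-type or fixed-point iteration, because the error is quadratic.

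For the first step, where $f$ need not be close to $\varphi$ on $E$, we first connect. Using connectedness of $U$, choose for each point of $E$ a path in $U$ from $f$ to $\varphi$, continuously in the point after shrinking to the finitely many arcs $I_j$. Then replace $f$ by a disc that agrees approximately with these paths on a neighbourhood of $E$. This is done via a Möbius/Blaschke-type reparametrisation concentrated near the arcs $I_j$: a holomorphic self-map $\psi$ of a thin neighbourhood that sends small neighbourhoods of $E$ along the paths. Concretely, we glue, in a thin strip near $E$, a holomorphic disc family following the homotopy. Alternatively we run many small steps of the local argument along the homotopy $f\simeq\varphi$ on $E$. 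The remaining requirements are then routine: convergence in $U$ by keeping all corrections small, which uses completeness of the metric to get a limit, and exact interpolation $F|_E=\varphi$ in the limit.

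The main obstacle is this global first step, which reaches $\varphi$ from $f$ when they are far apart in $X$ while keeping approximation on compacts away from $E$. Continuity at $E$ in the limit also requires uniform, not merely local, smallness of the corrections near $E$. We would try first the many-small-steps approach along a homotopy in $U$, each step using the spray-based local correction above.
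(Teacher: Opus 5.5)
There is a genuine gap, and it is where you place the main obstacle: the global step that moves the boundary values on $E$ from $f|_E$ to (near) $\varphi$ when these are far apart in $X$. None of your three suggestions works as stated.

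\begin{enumerate}[(i)]
\item A M\"obius/Blaschke-type reparametrisation $\psi$ of (a subset of) $\overline\D$ only produces maps $f\circ\psi$ with values in $f(\overline\D)$. That set need not come near $\varphi(E)$. To reach new values you must first leave $\overline\D$, that is, extend $f$ holomorphically along arcs in $U$, and that construction is exactly what is missing.
\item ``Gluing a holomorphic disc family in a thin strip'' is not a holomorphic operation. It would require solving a Cousin/$\bar\partial$-type problem with merely continuous data on $E$, which you do not address.
\item The many-small-steps idea fails because the admissible step size in your local argument is not uniform. The ball $B$, the spray and the implicit-function radius all come from a Stein neighbourhood of the graph of the current disc $F_{n-1}$. (For such a graph you need Poletsky's theorem, as in the paper, not Siu's.) So all three depend on $F_{n-1}$, and nothing stops them shrinking from one step to the next. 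The walk along the homotopy may therefore stall before reaching $\varphi$.
\end{enumerate}

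The same chicken-and-egg problem affects your iteration even after a good first step. You need $\sup_E d(F_{n-1},\varphi)$ to lie below a threshold that is determined by $F_{n-1}$ itself. Your local step is otherwise sound, and in fact exact. Since $s(z,h(z))=\varphi(z)$ on $E$, and the correction at $c$ can be made with a function vanishing on $E$ (e.g.\ $(1-q)/(1-q(c))$), one application already gives $F|_E=\varphi$, with no Newton iteration. So the whole difficulty is getting into that regime, and the proposal does not do it.

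The missing idea is how the paper reduces the uncountable set $E$ to finitely many points.
\begin{enumerate}[(1)]
\item Partition $E$ into finitely many closed pieces $E_i$ of small diameter, with $a_i\in E_i$.
\item Theorem~\ref{thm_RudinCarleson} (or Theorem~\ref{thm_RudinCarleson_general_c}) gives a holomorphic self-map $\alpha$ of $\overline\D$ with $\alpha(E_i)=\{a_i\}$. It is uniformly close to the identity and fixes $c$.
\item At the finitely many points $a_i$, the values can be moved arbitrarily far by the Forstneri\v c--Wold finger technique (Theorems~\ref{thm_ForstnericWold} and \ref{thm_ForstnericWold_generaltarget}):
\begin{itemize}
\item[--] extend $f$ over radial segments at the $a_i$ by arcs in $U$ from $f(a_i)$ to $\varphi(a_i)$;
\item[--] approximate on $\overline\D\cup\bigcup L_i$ by Mergelyan's theorem (for general $X$, via Poletsky's Stein neighbourhood of the graph, an embedding into $\C^N$ and a holomorphic retraction);
\item[--] precompose with a conformal diffeomorphism of $\overline\D$ onto a domain with thin fingers reaching $ra_i$. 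This map is close to the identity off small neighbourhoods $U_i$ of the $a_i$, chosen disjoint from $\alpha(K)$.
\end{itemize}
\item The resulting $g\circ\alpha$ equals $\varphi(a_i)$ on $E_i$ and approximates $f$ on $K$.
\end{enumerate}
All later corrections use the same device with arcs inside the balls $B(\varphi(a_{i'}),\eta_j)$. This gives $\lVert f_{j+1}-f_j\rVert_{\overline\D}<\epsilon/2^{j+1}$ with no threshold depending on the current map. Hence the sequence converges uniformly, and the limit is continuous at $E$ and interpolates $\varphi$ exactly.
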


Recall that a complex manifold $X$ is said to be \emph{Oka} if it satisfies any of a number of equivalent properties, each stating in some form that there is a rich supply of holmorphic maps from any Stein manifold $Y$ into $X$, in the sense that there exist only topological obstructions to finding holomorphic solutions to approximation and interpolation problems concerning maps $Y \to X$. We refer the reader to the recent paper \cite{Forstneric2025} for an introduction to Oka manifolds and Oka theory more generally (see also the monograph \cite{Forstneric2017}). In this paper we define a complex manifold $X$ to be Oka if it satisfies the \emph{basic Oka property with approximation and interpolation}, as follows. Let $Y$ be a Stein manifold, $K \subset Y$ be a holomorphically convex compact subset, and $L \subset Y$ be a closed complex submanifold. Then, every continuous map $f : Y \to X$ which is holomorphic on a neighbourhood of $K$ and holomorphic on $L$ can be uniformly approximated on $K$ and interpolated on $L$ by a holomorphic map $F : Y \to X$.

In the case that $X$ is an Oka manifold our main theorem yields the following corollary.

\begin{ncorollary}
	Let $X$ be an Oka manifold. Let $E \subset S$ be a closed set of measure zero and $K \subset \D$ be a compact set without any holes (that is, such that $\D \setminus K$ has no relatively compact connected components, where the closure is taken with respect to $\D$). Let $c \in \D$. Given a continuous map $\varphi : \overline \D \to X$ that is holomorphic on a neighbourhood of $K$, there exists a continuous map $F : \overline \D \to X$, holomorphic on $\D$, that uniformly approximates $\varphi$ on $K$, interpolates $\varphi$ on $E$, and satisfies $F(c) = \varphi(c)$.
\end{ncorollary}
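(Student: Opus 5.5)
We derive the Corollary from the Main Theorem in two steps. First we use the Oka property to replace $\varphi$ by a map that is holomorphic on all of $\D$ and has the right values on $K$, $E$ and at $c$. Then we apply the Main Theorem to that map.

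\textbf{Step 1: reduction to a map holomorphic on $\D$.} We may assume that $c\in K$ by replacing $K$ with $K\cup\{c\}$. This is harmless if $c\in K$. Otherwise we add a small closed disc around $c$ lying in the open set where $\varphi$ is holomorphic... but $\varphi$ need not be holomorphic near $c$. So instead we keep $K$ as it is and impose interpolation at the single point $c$ separately.

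Fix $0<r<1$ so close to $1$ that $K\subset r\D$ and $|c|<r$. Since $K$ has no holes, $K$ is holomorphically convex (Runge) in $\D$. Take $Y=\D$ (a Stein manifold) and $L=\{c\}$ (a closed complex submanifold). The restriction $\varphi|_\D$ is continuous, holomorphic near $K$, and trivially holomorphic on $L$.

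By the basic Oka property with approximation and interpolation, there is a holomorphic $g:\D\to X$ with $g(c)=\varphi(c)$ and $g$ uniformly close to $\varphi$ on $K$. This requires the Oka property for the slightly enlarged compact set $K$ together with a neighbourhood of it. The Oka property also yields a homotopy between $g$ and $\varphi$; in the standard formulation it is through maps with the same properties. We will not need it.

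\textbf{Step 2: rebuilding a map continuous up to the boundary.} The map $g$ need not extend to $\overline\D$, so we modify it. Set $f(z)=g(sz)$ for $s<1$ close to $1$. Then $f$ is continuous on $\overline\D$ and holomorphic on $\D$, and $f$ is close to $g$, hence to $\varphi$, on $K$ (by uniform continuity on a compact neighbourhood).

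The problem is that $f(c)=g(sc)\neq\varphi(c)$ in general. To fix this, use the Oka step with interpolation at the point $c/s$ instead: that is, choose $g$ with $g(c/s)=\varphi(c)$. The circularity is resolved by fixing $s$ first. Choose $s$ so close to $1$ that $|c|/s<r$ and that $s^{-1}K$ is still contained in a neighbourhood $W$ of $K$ on which $\varphi$ is holomorphic. Then apply the Oka property on $Y=\D$ with compact set $s^{-1}K$ and point $c/s$, using the map $z\mapsto\varphi(sz)$ near $s^{-1}K$ and at $c/s$. The compact set $s^{-1}K$ is still without holes.

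One subtlety is that $\varphi(s\,\cdot)$ is only defined on $s^{-1}\overline\D\supset\D$, which is fine. The resulting $f(z)=g(sz)$ then approximates $\varphi$ on $K$, satisfies $f(c)=\varphi(c)$, and belongs to $\mathscr{C}(\overline\D,X)\cap\mathcal{O}(\D,X)$.

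\textbf{Step 3: applying the Main Theorem.} Give $X$ a complete Riemannian metric. Apply the Main Theorem to $f$, the set $E$, the map $\varphi|_E$, and the point $c$. Since $K\subset\D$ is a compact subset of $\overline\D\setminus E$, we obtain $F$ continuous on $\overline\D$ and holomorphic on $\D$ with the following properties:
\begin{itemize}
\item $F|_E=\varphi|_E$;
\item $F(c)=f(c)=\varphi(c)$;
\item $F$ is as close as desired to $f$ on $K$.
\end{itemize}
Combining this with Step 2 via the triangle inequality gives the required approximation of $\varphi$ on $K$.

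The main obstacle is Step 2: the Oka property produces maps on the open disc $\D$ only, with no control at the boundary. The rescaling trick, with interpolation point adjusted to $c/s$ and compact set $s^{-1}K$, is what I would use to overcome it. One must check that the approximation quality transfers under the rescaling, which follows from uniform continuity of $\varphi$ on compact neighbourhoods of $K$.
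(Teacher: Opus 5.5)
Your overall plan matches the paper's: use the Oka property to produce a map that is continuous on $\overline\D$, holomorphic on $\D$, approximates $\varphi$ on $K$ and agrees with it at $c$, then apply the Main Theorem. Step 3 is fine. The difference is that you apply the Oka property on $Y=\D$, which creates the boundary problem you try to solve in Step 2. Step 2 fails as written because the rescaling goes the wrong way. Suppose $g$ approximates $z\mapsto\varphi(sz)$ on $s^{-1}K$ and $g(c/s)=\varphi(c)$. Then $f(z)=g(sz)$ satisfies $f(c/s^2)=\varphi(c)$, and $f\approx\varphi(s^2\,\cdot)$ holds only on $s^{-2}K$. In particular $f(c)=g(sc)$ is unconstrained. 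For $z\in K$, the value $f(z)=g(sz)$ involves $g$ on $sK$, where you imposed nothing. The map that actually inherits your conditions is $z\mapsto g(z/s)$, but it is only defined on $s\D$, not on $\overline\D$. The correct form of your trick is to interpolate at $sc$ and to approximate on $sK$ the map $w\mapsto\varphi(w/s)$. That map lives only on $s\overline\D\subsetneq\D$; your remark that the rescaled map is defined on a set containing $\D$ is exactly where the direction got reversed. So you must first extend it continuously to $\D$, e.g.\ by composing with the radial retraction $w\mapsto w/\max(1,\lvert w\rvert)$.

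There are two cleaner repairs. First, keep your initial order (choose $g$ with $g(c)=\varphi(c)$, then $s$), but contract towards $c$ rather than $0$: set $f(z)=g(c+s(z-c))$. Since $\lvert c+s(z-c)\rvert\le(1-s)\lvert c\rvert+s<1$ for $z\in\overline\D$, this $f$ is holomorphic on a neighbourhood of $\overline\D$ and satisfies $f(c)=\varphi(c)$ exactly. Moreover $f\to g$ uniformly on $K$ as $s\to1$, by uniform continuity of $g$ on a compact neighbourhood of $K$ in $\D$. Second, the paper's route removes the boundary issue altogether:
\begin{enumerate}[(i)]
\item extend $\varphi$ continuously from $\overline\D$ to $\C$, for instance via the same radial retraction;
\item observe that $K$ is polynomially convex, because any bounded component of $\C\setminus K$ would be a relatively compact component of $\D\setminus K$;
\item apply the Oka property with $Y=\C$ and $L=\{c\}$.
\end{enumerate}
The resulting entire map, restricted to $\overline\D$, is automatically continuous on $\overline\D$ and holomorphic on $\D$, and can be fed directly into the Main Theorem.
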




\begin{proof}
	Begin by extending $\varphi$ continuously from $\overline \D$ to all of $\C$. Since $X$ is Oka, applying the basic Oka property with approximation and interpolation to $\varphi$ yields an entire map $f : \C \to X$ that uniformly approximates $\varphi$ on $K$ and satisfies $f(c) = \varphi(c)$. Applying the Main Theorem to the maps $f\rvert_{\overline \D} : \overline \D \to X$ and $\varphi\rvert_E : E \to X$ yields the result.
\end{proof} 

\section{A classical Rudin-Carleson theorem for maps close to the identity}

\noindent 
In this section we establish a special case of the classical Rudin-Carleson theorem for maps close to the identity, the first of two essential ingredients required in the proof of our main theorem.

In the statement of the classical Rudin-Carleson theorem (Theorem \ref{thm_Rudin}), suppose that the continuous function $\varphi : E \to \C$ has image in the unit circle $S$, $\varphi(E) \subset S$. It then follows that the interpolating function $f : \overline \D \to \C$ provided by the theorem can be chosen to satisfy $f(\overline \D) \subset \overline \D$. Further suppose now that $\varphi$ is uniformly close to the identity  on $E$. The following result shows that, in this case, $f$ can be chosen such that it is both uniformly close to the identity on all of $\overline \D$ and fixes the origin.

\begin{theorem}
	\label{thm_RudinCarleson}
	Let $E \subset S$ be a closed set of measure zero. Let $\epsilon > 0$ and let $\varphi : E \to \C$ be a continuous function such that $\varphi(E) \subset S$ and $\lvert \varphi(z) - z \rvert < \epsilon$ for all $z \in E$. Then, there exists a continuous function $f : \overline \D \to \C$ that is holomorphic on $\D$ such that $f(\overline \D) \subset \overline \D$, $\lvert f(z) - z \rvert < \epsilon$ for all $z \in \overline \D$ and $f(z) = \varphi(z)$ for all $z \in E$. Furthermore, $f(0) = 0$.
\end{theorem}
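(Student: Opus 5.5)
The plan is to factor out the identity and apply the classical Rudin-Carleson theorem (Theorem \ref{thm_Rudin}) to a unimodular function close to $1$, using the target set $T$ to enforce all the required bounds at once. Since $\varphi(E) \subset S$ and $\lvert z \rvert = 1$ on $E$, define $\psi : E \to \C$ by $\psi(z) = \varphi(z)/z = \bar z\,\varphi(z)$. Then $\psi$ is continuous, $\psi(E) \subset S$, and
\[
\lvert \psi(z) - 1 \rvert = \lvert \varphi(z) - z \rvert < \epsilon
\]
for all $z \in E$. Because $E$ is compact, the quantity $\epsilon' := \max_{z \in E} \lvert \psi(z) - 1 \rvert$ (taken to be $0$ if $E = \emptyset$) satisfies $\epsilon' < \epsilon$.

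If $\epsilon' = 0$, then $\psi \equiv 1$ and $f(z) = z$ already has every required property. Otherwise, set
\[
T = \overline \D \cap \{ w \in \C : \lvert w - 1 \rvert \le \epsilon' \}.
\]
This is an intersection of two closed discs, so it is compact and convex. Its interior is nonempty, since it contains points $1 - t$ for small $t > 0$. A compact convex planar set with nonempty interior is homeomorphic to $\overline \D$, via radial projection from an interior point. By construction $\psi(E) \subset T$. Applying Theorem \ref{thm_Rudin} to $\psi$ and $T$ therefore gives a continuous $g : \overline \D \to \C$, holomorphic on $\D$, with $g\rvert_E = \psi$ and $g(\overline \D) \subset T$.

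Now put $f(z) = z\,g(z)$. It is continuous on $\overline \D$, holomorphic on $\D$, and satisfies $f(0) = 0$. On $E$ we have $f(z) = z\,\psi(z) = \varphi(z)$. Since $g(\overline \D) \subset \overline \D$, we get $\lvert f(z) \rvert \le \lvert z \rvert \le 1$. Finally,
\[
\lvert f(z) - z \rvert = \lvert z \rvert\,\lvert g(z) - 1 \rvert \le \epsilon' < \epsilon
\]
for all $z \in \overline \D$, which is the required uniform estimate.

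The only point needing care is to choose $T$ so that it is genuinely homeomorphic to a closed disc while encoding both constraints $\lvert g \rvert \le 1$ and $\lvert g - 1 \rvert < \epsilon$. Two things make this work. Passing from the strict bound to the compactness-derived $\epsilon'$ lets us use a closed lens-shaped set. Treating the degenerate case $\epsilon' = 0$ separately avoids a $T$ that collapses to the single point $\{1\}$. I expect no further obstacle.
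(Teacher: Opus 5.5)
Your proposal is correct and takes essentially the same route as the paper. Both divide $\varphi$ by $z$ on $E$, apply the classical Rudin--Carleson theorem to the resulting unimodular function with a disc-like target $T \subset \overline\D$ near $1$, and multiply back by $z$. Your lens $T = \overline\D \cap \{w : \lvert w-1\rvert \le \epsilon'\}$, with $\epsilon' < \epsilon$ obtained by compactness, is a concrete and slightly more careful realisation of the paper's choice of some $T \subset \overline\D \cap B(1;\epsilon)$ containing an arc through $\tilde\varphi(E)$.
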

\begin{proof}
	Define the continuous function $\tilde{\varphi} : E \to \C$ by $\tilde{\varphi}(z) = \varphi(z)/z$ for $z \in E$. We have $\lvert \tilde{\varphi}(z) \vert = \lvert \varphi(z)\rvert / \lvert z \rvert = 1$ for all $z \in E$. We also have, for $z \in E$,
	\[ \lvert \tilde\varphi(z) - 1 \rvert = \left\lvert \frac{\varphi(z)}{z} - 1\right\rvert = \left\lvert \frac{\varphi(z) - z}{z} \right\rvert < \epsilon\,,\]
	so that $\tilde{\varphi}(E) \subset B(1; \epsilon)$, the ball of radius $\epsilon$ centred at the point $1$.
	Since $E$ is compact, $\tilde{\varphi}(E)$ is contained in a compact arc $A \subset S \cap B(1; \epsilon)$. Let $T \subset \overline \D \cap B(1; \epsilon)$ be homeomorphic to $\overline \D$ such that $A\subset T$, so that we also have $\tilde{\varphi}(E) \subset T$.
	
	By Theorem \ref{thm_Rudin} there exists a continuous function $\tilde{f} : \overline \D \to \C$ that is holomorphic on $\D$ such that $\tilde{f}(z) = \tilde{\varphi}(z)$ for all $z \in E$, with $\tilde{f}(\overline \D) \subset T$. Define $f : \overline \D \to \C$ by $f(z) = z \tilde{f}(z)$. Then, for $z \in E$ we have $f(z) = z \tilde{f}(z) = z \tilde{\varphi}(z) = \varphi(z)$. Furthermore, for all $z \in \overline\D$ we have $\lvert f(z) \rvert = \lvert z \rvert \lvert \tilde{f}(z)\rvert \le \lvert \tilde{f}(z) \rvert \le 1$, since $\tilde{f}(z) \in T \subset \overline \D$. For $z \in \overline \D$ we also have
	\[\lvert f(z) - z \rvert = \lvert z \tilde{f}(z) - z \rvert = \lvert z \rvert \lvert \tilde{f}(z) - 1 \rvert \le \lvert \tilde{f}(z) - 1 \rvert < \epsilon\]
	since $\tilde{f}(z) \in T \subset B(1;\epsilon)$. Finally, $f(0) = 0 \tilde{f}(0) = 0$.
\end{proof}


We now show that instead of choosing $f$ such that it fixes the origin, we may instead choose $f$ such that it fixes any other single point in $\D$. We can of course also choose $f$ such that it in addition fixes pointwise any set of measure zero $M \subset S \setminus E$ by extending $\varphi$ to $M$ by the identity and redefining $E$ as $M \cup E$. Note that by the Schwarz-Pick lemma it is not possible for $f$ to fix more than a single point in $\D$ unless $\varphi = \id_E$.

\begin{theorem}
	\label{thm_RudinCarleson_general_c}
	Let $E \subset S$ be a closed set of measure zero. Let $\epsilon > 0$ and let $c \in \D$. Then, there exists $\delta > 0$ such that for every continuous function $\varphi : E \to \C$ with $\varphi(E) \subset S$ and $\lvert \varphi(z) - z \rvert < \delta$ for all $z \in E$, there is a continuous function $g : \overline \D \to \C$ that is holomorphic on $\D$ such that $g(\overline \D) \subset \overline \D$, $\lvert g(z) - z \rvert < \epsilon$ for all $z \in \overline \D$, $g(z) = \varphi(z)$ for all $z \in E$, and $g(c) = c$.
\end{theorem}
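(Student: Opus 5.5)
Conjugate by the disc automorphism sending $c$ to $0$, apply Theorem \ref{thm_RudinCarleson}, and conjugate back. Let
\[
\psi(z) = \frac{z - c}{1 - \bar c z},
\]
which maps $\overline \D$ homeomorphically onto itself, maps $S$ onto $S$, sends $c$ to $0$, and is holomorphic on a neighbourhood of $\overline \D$. Its inverse is $\psi^{-1}(w) = (w + c)/(1 + \bar c w)$. Both $\psi$ and $\psi^{-1}$ are uniformly continuous on $\overline \D$, indeed Lipschitz there with constants depending only on $\lvert c \rvert$.

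First, given $\epsilon > 0$, use uniform continuity of $\psi^{-1}$ on $\overline \D$ to pick $\eta > 0$ such that $\lvert w - w' \rvert < \eta$ implies $\lvert \psi^{-1}(w) - \psi^{-1}(w') \rvert < \epsilon$ for $w, w' \in \overline \D$. Then use uniform continuity of $\psi$ on $\overline \D$ to pick $\delta > 0$ such that $\lvert z - z' \rvert < \delta$ implies $\lvert \psi(z) - \psi(z') \rvert < \eta$. Set $E' = \psi(E)$. This is a closed subset of $S$, and it has measure zero because $\psi\rvert_S$ is a diffeomorphism of the circle.

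Next, given $\varphi$ with $\varphi(E) \subset S$ and $\lvert \varphi(z) - z \rvert < \delta$ on $E$, define $\varphi' = \psi \circ \varphi \circ \psi^{-1} : E' \to \C$. It is continuous and $\varphi'(E') \subset S$. For $w = \psi(z) \in E'$ we have $\lvert \varphi'(w) - w \rvert = \lvert \psi(\varphi(z)) - \psi(z) \rvert < \eta$. Theorem \ref{thm_RudinCarleson}, applied to $E'$, $\eta$ and $\varphi'$, gives a continuous $f : \overline \D \to \overline \D$, holomorphic on $\D$, with $\lvert f(w) - w \rvert < \eta$ on $\overline \D$, $f = \varphi'$ on $E'$ and $f(0) = 0$.

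Finally, set $g = \psi^{-1} \circ f \circ \psi$. This is well defined since $f(\overline \D) \subset \overline \D$. It is continuous on $\overline \D$, holomorphic on $\D$ (because $\psi(\D) = \D$ and $f$ is holomorphic there), and satisfies $g(\overline \D) \subset \overline \D$. For $z \in E$ we get $g(z) = \psi^{-1}(\varphi'(\psi(z))) = \varphi(z)$, and $g(c) = \psi^{-1}(f(0)) = \psi^{-1}(0) = c$. For $z \in \overline \D$, the points $w = \psi(z)$ and $f(w)$ both lie in $\overline \D$ with $\lvert f(w) - w \rvert < \eta$, so $\lvert g(z) - z \rvert = \lvert \psi^{-1}(f(w)) - \psi^{-1}(w) \rvert < \epsilon$.

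There is no real obstacle here. The only point needing care is the order of quantifiers: $\delta$ must be fixed from $\epsilon$ and $c$ alone, before $\varphi$ is given. Uniform continuity of $\psi^{\pm 1}$ on the compact set $\overline \D$ provides exactly this. The measure-zero property of $E'$ and the requirement $\varphi'(E') \subset S$ are immediate consequences of $\psi$ being a Möbius automorphism of the disc.
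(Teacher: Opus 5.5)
Your proof is correct, and it uses the same ingredients as the paper: the disc automorphism sending $c$ to $0$, with $\eta$ chosen by uniform continuity of its inverse and then $\delta$ by uniform continuity of the map itself. The difference is in how the reduction to the origin is packaged.

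You conjugate fully, $g = \psi^{-1}\circ f\circ\psi$, and invoke Theorem \ref{thm_RudinCarleson} as a black box on the transported set $\psi(E)$. This is modular, but it obliges you to check that $\psi(E)$ is again a closed null subset of $S$, which you do correctly.

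The paper never moves $E$. It applies the classical Theorem \ref{thm_Rudin} directly on $E$ to the unimodular function $\alpha(\varphi(z))/\alpha(z)$, which lies in $B(1;\eta)$. It then multiplies the resulting extension $\tilde f$ by $\alpha$ and only post-composes, $g = \alpha^{-1}\circ(\alpha\tilde f)$. This avoids any measure-theoretic check, at the cost of rerunning the argument of Theorem \ref{thm_RudinCarleson} with $\alpha(z)$ in place of $z$.

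If you unwind the proof of Theorem \ref{thm_RudinCarleson} inside your argument, the auxiliary function there, pulled back by $\psi$, extends exactly $\psi(\varphi(z))/\psi(z)$ from $E$. So the two constructions yield maps of the same form and differ only in bookkeeping.
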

\begin{proof}
	Define $\alpha : \overline \D \to \C$ by $\alpha(z) = (z-c)/(1-\overline c z)$.	Then, $\alpha$ is a homeomorphism of $\overline \D$ onto itself (necessarily with $\alpha(S) = S$) which restricts to a holomorphic automorphism of $\D$, and $\alpha(c) = 0$. By the uniform continuity of $\alpha^{-1}$ on $\overline{\D}$ there exists $\eta > 0$ such that for $x, y \in \overline \D$,
	\begin{equation}
		\label{unif_cont_alphainv}
		\lvert x - y \rvert < \eta \implies \lvert \alpha^{-1}(x) - \alpha^{-1}(y) \rvert < \epsilon\,.
	\end{equation}
	The uniform continuity of $\alpha$ on $\overline{\D}$ then gives $\delta > 0$ such that for $x, y \in \overline \D$,
	\begin{equation}
		\label{unif_cont_alpha}
		\lvert x - y \rvert < \delta \implies \lvert \alpha(x) - \alpha(y) \rvert < \eta\,.
	\end{equation}
	
	Suppose that $\varphi$ is given as stated in the theorem. Defining $\psi : E \to \C$ by $\psi = \alpha \circ \varphi$, we have that $\psi(E) \subset S$. (Note that since $\varphi$ is close to the identity on $E$, $\psi$ is close to $\alpha$.) Defining $\tilde{\psi} : E \to \C$ by $\tilde{\psi}(z) = \psi(z)/\alpha(z)$, we also have that $\tilde{\psi}(E) \subset S$. For $z \in E$ we have
	
	\[\left\lvert \tilde{\psi}(z) - 1 \right\rvert = \left\lvert \frac{\alpha(\varphi(z))-\alpha(z)}{\alpha(z)} \right\rvert = \left\lvert \alpha(\varphi(z))-\alpha(z)\right\rvert < \eta\,,\]
	
	\noindent where we have used (\ref{unif_cont_alpha}) and the assumption that $\lvert \varphi(z) - z \rvert < \delta$ for $z \in E$. This shows that $\tilde{\psi}(E) \subset B(1;\eta)$. As in the proof of Theorem \ref{thm_RudinCarleson}, we obtain via Theorem \ref{thm_Rudin} a continuous function $\tilde{f}: \overline \D \to \C$ that is holomorphic on $\D$, which extends $\tilde{\psi}$ from $E$, and which satisfies $\tilde{f}(\overline \D) \subset \overline \D \cap B(1;\eta)$.
	
	Let $f : \overline \D \to \C$ be defined by $f(z) = \alpha(z) \tilde{f}(z)$. Then for $z \in E$ we have $f(z) = \psi(z) = \alpha (\varphi(z))$. We have $f(\overline \D)\subset \overline \D$ and $f(c) = 0$. For $z \in \overline \D$ we have $\lvert f(z) - \alpha(z)\rvert < \eta$.
	
	To complete the proof, define $g : \overline \D \to \C$ by $g = \alpha^{-1}\circ f$. For $z \in E$ we have $g(z) = \varphi(z)$. We have $g(\overline \D) \subset \overline \D$ and $g(c) = c$. Finally, for $z \in \overline \D$ we have, by (\ref{unif_cont_alphainv}),
	
	\[\lvert g(z) - z \rvert = \lvert \alpha^{-1}(f(z)) - \alpha^{-1}(\alpha(z))\rvert < \epsilon\,.\]	\end{proof}
	
\section{Extending fingers using an approach of Forsterni\v c-Wold}

\noindent
In this section we prove Theorem \ref{thm_ForstnericWold} on modifying maps $\overline \D \to \C^n$ in a neighbourhood of finitely many boundary points by attaching fingers to the image along arcs in $\C^n$, while keeping the map approximately fixed elsewhere and fixing (precisely) finitely many points in $\D$. Our method follows closely the technique for exposing boundary points developed by Forstneri\v c and Wold in \cite{ForstnericWold2009} as part of their work on constructing proper holomorphic embeddings into $\C^2$ of interiors of compact bordered Riemann surfaces already holomorphically embedded in $\C^2$. In their work it was important that the process of attaching fingers, or exposing boundary points, preserved the injectivity of the given map near the boundary. We do not, however, require this condition. Another difference is that our maps $\overline \D \to \C^n$ are only continuous on the boundary, whereas those considered in \cite{ForstnericWold2009} are $\CC^1$.





The following is a slightly strengthened version of Lemma 2.2 in Forstneri\v c-Wold. An examination of their proof shows that the stated condition below holds regarding control on the images of the neighbourhoods of the boundary points.

\begin{lemma}
	\label{lem_attachspike}
	Let $R$ be a Riemann surface and $D$ be a relatively compact smoothly bounded domain with nonempty boundary in $R$, not necessarily connected. Given pairwise distinct points $a_1,a_2,\dots,a_k \in \overline D$ with $a_1 \in \partial D$, neighbourhoods $U \Subset U' \subset R$ of $a_1$, a point $b \in R \setminus \overline D$ in the same connected component of $R \setminus D$ as $a_1$, and a positive integer $N \in \N$, there is a smooth diffeomorphism $\phi : \overline D \to \phi(\overline D) \subset R$ satisfying the following:
	\begin{enumerate}[(i)]
		\item $\phi : D \to \phi(D)$ is a biholomorphism,
		\item $\phi(a_1) = b$,
		\item $\phi$ is tangent to the identity to order $N$ at each of the points $a_2,\dots,a_k$,
		\item $\phi$ is as close as desired to the identity map on $\overline D \setminus U$ in the smooth topology on the space of maps.
	\end{enumerate}
	Furthermore, if $\gamma \subset R$ is a smooth Jordan arc with endpoints $a_1$ and $b$ such that $\gamma \cap \overline D = \{ a_1 \}$ and the tangent lines to $\gamma$ and $\partial D$ at the point $a_1$ are transverse, and $V$ is a neighbourhood of $\gamma$, then it is possible to ensure that $\phi(\overline U \cap \overline D) \subset U' \cup V$.
\end{lemma}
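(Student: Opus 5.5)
The plan is to follow the Forstnerič--Wold construction of Lemma 2.2 in \cite{ForstnericWold2009}, checking at each stage where the neighbourhood $\overline U \cap \overline D$ is mapped. The construction has three steps.

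\textbf{Step 1 (thin domain).} Since the tangent lines of $\gamma$ and $\partial D$ at $a_1$ are transverse, we may first shrink $U$. We may also shrink $V$, keeping $V \subset U' \cup V$ as the target, so that $V$ is a thin tubular neighbourhood of $\gamma$. Then form the compact set $\overline D \cup \gamma$. Around it, choose a smoothly bounded domain $D' \supset \overline D$ with the following properties: it agrees with $D$ outside a small neighbourhood $W \Subset U$ of $a_1$; near $\gamma$ it is a thin finger contained in $V$ that ends just past $b$; and $D' \setminus \overline D \subset W \cup V$.

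\textbf{Step 2 (approximate map).} Choose a smooth map $\phi_0 : \overline D \to \overline{D'}$ that is the identity outside $U$ and sends a small boundary neighbourhood of $a_1$ along the finger, with $\phi_0(a_1)=b$. By construction $\phi_0(\overline U\cap\overline D) \subset U \cup V \subset U'\cup V$. This map is holomorphic away from a small set, but not everywhere.

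\textbf{Step 3 (correction).} Forstnerič--Wold correct $\phi_0$ to a holomorphic $\phi$ in two stages. First they use Mergelyan/Runge approximation on the admissible set $\overline D\cup\gamma$, with interpolation to order $N$ at $a_2,\dots,a_k$ and at $a_1$. The set is admissible because $R\setminus(\overline D\cup\gamma)$ has no new holes, since $b$ lies in the same component of $R\setminus D$ as $a_1$. Second, they use a Riemann--Hilbert type or gluing argument that keeps $\phi$ a diffeomorphism onto its image, holomorphic on $D$, and $\CC^\infty$-close to $\id$ off $U$. The resulting $\phi$ is uniformly close to $\phi_0$ on $\overline D\setminus W'$ for a small neighbourhood $W'$ of $a_1$. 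Near $a_1$ the image lies within a small distance of $\gamma\cup W$.

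\textbf{Verification of the extra condition, and the main obstacle.} Items (i)--(iv) are exactly Forstnerič--Wold's lemma. The new work is the containment, which I expect to be the main obstacle: showing quantitatively that the correction does not push points of $\overline U\cap\overline D$ outside $U'\cup V$. For points of $\overline U\setminus W'$ this follows from uniform closeness to $\phi_0$, provided the approximation error is less than $\dist(\phi_0(\overline U\setminus W'), R\setminus(U'\cup V))>0$. This distance is positive because $\phi_0(\overline U) \subset U\cup V$ and $U\Subset U'$. For points in $W'$, I would use the structure of their proof: their $\phi$ maps a small half-disc at $a_1$ into an arbitrarily thin neighbourhood of the finger $\gamma$. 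This part I would take from their explicit construction, where the image of $W'$ lies in the image of a small neighbourhood of $\gamma$ under a map close to the identity. Choosing all thickness parameters small relative to $V$ then gives $\phi(\overline U\cap\overline D)\subset U'\cup V$.
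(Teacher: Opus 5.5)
Your plan is essentially the paper's: the paper gives no independent argument, only the remark that inspecting Forstneri\v c--Wold's proof of their Lemma 2.2 shows $\phi(\overline U\cap\overline D)\subset U'\cup V$, and you likewise take the key near-$a_1$ containment from their explicit finger construction. One caution is that your claim that the holomorphic $\phi$ is uniformly close to an arbitrarily chosen, non-holomorphic $\phi_0$ on $\overline D\setminus W'$ is not something their construction provides; it is also unnecessary, since after shrinking $U$ to a small neighbourhood of $a_1$, condition (iv) already places the image of the rest of $\overline U\cap\overline D$ inside $U'$.
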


An inductive argument using Lemma \ref{lem_attachspike} then gives the following result (compare to Forstneri\v c-Wold, Theorem 2.3).
\begin{theorem}
	\label{thm_spikes}
	Let $D$ be a relatively compact smoothly bounded domain in a Riemann surface $R$.
	Choose finitely many pairwise distinct points $a_1,\dots,a_k \in \partial D$, $b_1,\dots,b_k \in R \setminus \overline D$, and $c_1, \dots, c_l \in \overline D \setminus \{a_1,\dots,a_k\}$ such that for each $j = 1,\dots,k$ the points $a_j$ and $b_j$ belong to the same connected component of $R\setminus D$. Let $\gamma_1,\dots,\gamma_k$ be pairwise disjoint smooth Jordan arcs such that, for each $j = 1,\dots,k$, $\gamma_j$ has endpoints $a_j$ and $b_j$, $\gamma_j \cap \overline D = \{a_j\}$, and the tangent lines to $\gamma_j$ and $\partial D$ at the point $a_j$ are transverse. For $j = 1,\dots,k$, let $V_j \subset R$ be a neighbourhood of $\gamma_j$ such that $V_1,\dots,V_k$ are pairwise disjoint, and let $U_j \Subset V_j$ be neighbourhoods of the points $a_j$.
	
	Then, for all $N \in \N$ there exists a smooth diffeomorphism $\phi : \overline D \to \phi(\overline D)$ onto a smoothly bounded domain $\phi(\overline D) \subset R$ such that:
	\begin{enumerate}[(i)]
		\item $\phi : D \to \phi(D)$ is a biholomorphism,
		\item $\phi(a_j) = b_j$ for $j = 1,\dots,k$,
		\item $\phi(\overline U_j \cap \overline D) \subset V_j$ for $j = 1,\dots,k$,
		\item $\phi$ is tangent to the identity map to order $N$ at $c_i$ for $i = 1,\dots,l$.		
	\end{enumerate}
	Furthermore, $\phi$ can be chosen as close as desired to the identity map in the smooth topology on $\overline D \setminus \bigcup_{j = 1}^k U_j$.
\end{theorem}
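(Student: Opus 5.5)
We will prove Theorem \ref{thm_spikes} by induction on $k$, applying Lemma \ref{lem_attachspike} once for each boundary point $a_j$. At each step we will keep the points not yet treated, the images of the points already moved, and the $c_i$ among the fixed points $a_2,\dots,a_k$ of the lemma. The composition of the diffeomorphisms produced at each stage will be the required $\phi$.

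Set $D_0 = D$ and $\phi_0 = \id$. Suppose $\phi_{j-1} : \overline D \to \overline D_{j-1} := \phi_{j-1}(\overline D)$ has been constructed with the following properties:
\begin{itemize}
\item[(a)] it is a smooth diffeomorphism onto a smoothly bounded domain and biholomorphic on $D$;
\item[(b)] $\phi_{j-1}(a_m) = b_m$ and $\phi_{j-1}(\overline U_m \cap \overline D) \subset V_m$ for $m < j$;
\item[(c)] it is tangent to the identity to order $N$ at every $c_i$;
\item[(d)] it is close to the identity in $\CC^\infty$ on $\overline D \setminus \bigcup_{m<j} U_m$.
\end{itemize}

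Since $\phi_{j-1}$ is close to the identity near $a_j$ (taking $U_m$ and the approximation small), the point $a_j' := \phi_{j-1}(a_j)$ lies in $\partial D_{j-1}$ near $a_j$. We modify $\gamma_j$ slightly near its initial point to a smooth arc $\gamma_j'$ from $a_j'$ to $b_j$ that stays in $V_j$, meets $\overline D_{j-1}$ only at $a_j'$, and is transverse to $\partial D_{j-1}$ there. This is possible because $\partial D_{j-1}$ is $\CC^1$-close to $\partial D$ away from the fingers already attached, and those fingers lie in $V_m$, which is disjoint from $V_j$. Likewise $b_j$ lies in the component of $R \setminus D_{j-1}$ containing $a_j'$, since $\gamma_j'$ connects them.

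We then choose neighbourhoods $U_j'' \Subset U_j' \subset V_j$ of $a_j'$ with $\phi_{j-1}^{-1}(U_j'')$ containing a smaller prescribed neighbourhood $\tilde U_j \subset U_j$ of $a_j$. We apply Lemma \ref{lem_attachspike} to $D_{j-1}$ with the following data: the point $a_1 = a_j'$; the other points $\phi_{j-1}(a_m)$ for $m > j$, the points $b_m$ for $m<j$, and $\phi_{j-1}(c_i) = c_i$; the arc $\gamma_j'$; and the neighbourhood $V = V_j$. This gives $\psi_j$. We set $\phi_j = \psi_j \circ \phi_{j-1}$.

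Tangency to order $N$ at the $c_i$ is preserved under composition, as are the equalities $\phi_j(a_m) = b_m$ for $m<j$. The containment $\phi_j(\overline U_j\cap \overline D) \subset V_j$ follows from the last clause of the lemma, together with closeness of $\psi_j$ to the identity on the rest of $\overline U_j \cap \overline D$ (which is mapped by $\phi_{j-1}$ near $\overline U_j \Subset V_j$). The previously attached fingers $\phi_{j-1}(\overline U_m \cap \overline D) \subset V_m$ lie outside $U_j'$. There $\psi_j$ is $\CC^\infty$-close to the identity, so they stay in the open set $V_m$ provided the approximation is fine enough; compactness gives a positive margin. Finally, $\phi_k$ is close to the identity on $\overline D\setminus\bigcup U_j$: closeness on compacts is preserved under composition of maps close to the identity when the errors are chosen inductively small. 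The image of $\phi_k$ is smoothly bounded because $\phi_k$ is a diffeomorphism of the closed domain. Strictly, we apply the lemma with $U=\tilde U_j$-type sets so that closeness is achieved off $U_j$.

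The main obstacle is the bookkeeping in the inductive step. We must verify that, after earlier fingers have been attached, the hypotheses of Lemma \ref{lem_attachspike} still hold at $a_j'$: the transversality of the adjusted arc, the disjointness of $\gamma_j'$ from $\overline D_{j-1}$, and the correct connected component. We must also verify that later perturbations do not push earlier fingers out of their $V_m$. Both are handled by choosing the approximation at each step sufficiently fine relative to the compact sets and the margins determined by the previous steps.
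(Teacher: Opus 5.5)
Your proposal is correct and is essentially the paper's proof. Both arguments apply Lemma \ref{lem_attachspike} once per point $a_j$ and compose $\phi=\phi_k\circ\cdots\circ\phi_1$. At each step they keep the already-moved $b_m$, the untreated $a_m$ and the $c_i$ fixed to order $N$, and use closeness to the identity away from the current neighbourhood to keep earlier fingers inside their $V_m$.

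Two small remarks. First, because the untreated $a_m$ are fixed to order $N$ at every earlier step, $\phi_{j-1}(a_j)=a_j$ exactly. With $N\ge 2$, as the paper assumes, $\partial\phi_{j-1}(D)$ also keeps the same tangent line at $a_j$, so the original $\gamma_j$ can be used unmodified. Second, the lemma's neighbourhood $U_j''$ must satisfy $U_j''\cap\phi_{j-1}(\overline D)\subset\phi_{j-1}(U_j\cap\overline D)$ to give closeness off $U_j$, which is the reverse of the containment you first wrote (your final sentence corrects this).
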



\begin{proof}
	Without loss of generality, assume that $N \ge 2$. For the base case, choose a neighbourhood $U'_1$ of $a_1$ such that $U_1 \Subset U'_1 \Subset V_1$, and	let $\phi_1 : \overline D \to \phi_1(\overline D)$ be a diffeomorphism given by Lemma \ref{lem_attachspike} such that:
	\begin{itemize}
		\item $\phi_1 : D \to \phi_1(D)$ is a biholomorphism,
		\item $\phi_1(a_1) = b_1$,
		\item $\phi_1(\overline U_1 \cap \overline D) \subset U'_1 \cup V_1 \subset V_1$,
		\item $\phi_1$ is tangent to the identity to order $N$ at the points $a_j$ for $j = 2,\dots,k$ and the points $c_i$ for $i = 1,\dots,l$,
		\item $\phi_1$ is uniformly close to the identity map in the smooth topology on $\overline D \setminus U_1$.
	\end{itemize}
	We may also assume that for each $j = 2,\dots,k$ we have $\phi_1(U_j) \Subset V_j$, that the arc $\gamma_j$ meets $\partial \phi_1(D)$ transversally at $\phi_1(a_j) = a_j$, and $\gamma_j \cap \phi_1(\overline D) = \{a_j\}$.
	
	In the second step, we choose a neighbourhood $U'_2$ of $a_2$ such that $\phi_1(U_2) \Subset U'_2 \Subset V_2$, and obtain a diffeomorphism $\phi_2 : \phi_1(\overline D) \to \phi_2(\phi_1(\overline D))$ such that:
	\begin{itemize}
		\item $\phi_2 : \phi_1(D) \to \phi_2(\phi_1(D))$ is a biholomorphism,
		\item $\phi_2(a_2) = b_2$,
		\item $\phi_2(\phi_1(\overline U_2) \cap \phi_1(\overline D)) \subset U'_2 \cup V_2 \subset V_2$,
		\item $\phi_2$ is tangent to the identity to order $N$ at the point $b_1$, the points $a_j$ for $j = 3,\dots,k$ and the points $c_i$ for $i = 1,\dots,l$,
		\item $\phi_2$ is uniformly close to the identity map in the smooth topology on $\phi_1(\overline D) \setminus \phi_1(U_2)$.
	\end{itemize}
	We may also assume that $\phi_2(\phi_1(\overline U_1 \cap \overline D)) \subset V_1$ and that for each $j = 3,\dots,k$ we have $\phi_2(\phi_1(U_j)) \Subset V_j$, that each arc $\gamma_j$ meets $\partial \phi_2(\phi_1(D))$ transversally at $a_j$, and $\gamma_j \cap \phi_2(\phi_1(\overline D)) = \{a_j\}$.
	
	Continuing in this manner, we let $\phi = \phi_k \circ \phi_{k-1} \circ \dots \circ \phi_1$. Then, $\phi : \overline D \to \phi(\overline D)$ is a diffeomorphism satisfying the necessary properties.	
\end{proof}	

We are now in a position to prove the required result on attaching fingers to the image of continuous maps $\overline \D \to \C^n$ that are holomorphic on $\D$, the second essential ingredient in the proof of our main theorem.

\begin{theorem}
	\label{thm_ForstnericWold}
		Let $f : \overline \D \to \C^n$ $(n \ge 1)$ be a continuous map that is holomorphic on $\D$. Let $a_1, \dots, a_k \in \partial \D$ be distinct points. For $j = 1, \dots, k$, let $U_j \subset \overline \D$ be a (relatively open) neighbourhood of $a_j$ such that $U_1,\dots,U_k$ are pairwise disjoint. Let $b_1, \dots, b_k \in \C^n$. For $j = 1, \dots, k$, let $\lambda_j \subset \C^n$ be a continuous arc from $f(a_j)$ to $b_j$. Let $c_1, \dots, c_s \in \overline \D \setminus \{a_1,\dots,a_k\}$. For $j = 1, \dots, k$, let $V_j \subset \C^n$ be a neighbourhood of $f(\overline U_j) \cup \lambda_j$, and let $V \subset \C^n$ be a neighbourhood of $f(\overline \D) \cup \bigcup_{j=1}^k \lambda_j$. Then, $f$ can be uniformly approximated on $\overline \D \setminus \bigcup_{j=1}^k U_j$ by continuous functions $g : \overline \D \to \C^n$ that are holomorphic on $\D$, such that $g(a_j) = b_j$ for $j = 1 , \dots, k$, $g(U_j) \subset V_j$ for $j = 1 , \dots, k$,
		$g(c_i) = f(c_i)$ for $i = 1,\dots,s$, and $g(\overline \D) \subset V$.
\end{theorem}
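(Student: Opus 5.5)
We follow the Forstneri\v c--Wold strategy. First reduce to the case where $f$ is holomorphic on a neighbourhood of $\overline \D$. Take $r<1$ close to $1$ and put $f_r(z)=f(rz)$. Since $f$ is uniformly continuous on $\overline \D$, $f_r \to f$ uniformly on $\overline \D$ as $r \to 1$. The values $f_r(c_i)$ differ slightly from $f(c_i)$, and $f_r(a_j)$ from $f(a_j)$; both discrepancies are corrected at the end.

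To attach the fingers, we first push the points $a_j$ out into the plane. Choose points $b_j' \in \C \setminus \overline \D$ near $a_j$, and pairwise disjoint short smooth arcs $\gamma_j$ from $a_j$ to $b_j'$, each radial and hence transverse to $S$. Extend $f_r$ to a holomorphic map on a neighbourhood $W$ of $\overline \D \cup \bigcup_j \gamma_j$. The difficulty is that $f_r$ only carries $\gamma_j$ onto a short arc near $f(a_j)$, whereas we need a map whose image along the finger follows $\lambda_j$.

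\textbf{Main obstacle.} We therefore need a holomorphic map $h$ on a neighbourhood of $\overline \D \cup \bigcup_j \gamma_j$ that approximates $f_r$ on $\overline \D$ and maps $\gamma_j$ into a small neighbourhood of $f_r(\overline{U_j}) \cup \lambda_j$, with $h(b_j')$ approximately $b_j$. We build it as follows:
\begin{enumerate}[(a)]
\item Perturb $\lambda_j$ slightly to a smooth arc starting at $f_r(a_j)$.
\item Define $h$ on $\gamma_j$ as a continuous parametrisation of $\lambda_j$, and keep $h = f_r$ on $\overline \D$.
\item Since $K = \overline \D \cup \bigcup_j \gamma_j$ has connected complement, Mergelyan's theorem, applied componentwise, approximates this map uniformly on $K$ by polynomial maps $h$.
\end{enumerate}

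Next apply Theorem \ref{thm_spikes} with $R = \C$ and $D = \D$. This gives $\phi$, biholomorphic on $\D$ and smooth up to the boundary, with:
\begin{itemize}
\item $\phi(a_j) = b_j'$;
\item $\phi(\overline U_j' \cap \overline \D)$ contained in a thin neighbourhood of $\gamma_j \cup (\overline U_j' \cap \overline \D)$, for smaller neighbourhoods $U_j' \Subset U_j$;
\item $\phi$ close to the identity off $\bigcup_j U_j'$;
\item $\phi$ tangent to the identity at $c_1,\dots,c_s$, so $\phi(c_i) = c_i$ (if some $c_i \in S$ this still makes sense).
\end{itemize}
Set $g_0 = h \circ \phi$. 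This map is continuous on $\overline \D$ and holomorphic on $\D$. It approximates $f$ on $\overline \D \setminus \bigcup_j U_j$, and it maps $U_j$ into $V_j$ provided the neighbourhoods are chosen small enough. Moreover $g_0(a_j) = h(b_j')$ is close to $b_j$, $g_0(c_i)$ is close to $f(c_i)$, and $g_0(\overline \D) \subset V$.

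Finally we correct the finitely many values exactly. Choose polynomials $p_1,\dots,p_{k+s}$ with $p_m(x_\ell) = \delta_{m\ell}$, where $\{x_\ell\} = \{a_j\} \cup \{c_i\}$ are distinct points. Set
\[
g = g_0 + \sum_{j} (b_j - g_0(a_j))\, p_j + \sum_{i} (f(c_i) - g_0(c_i))\, p_{k+i}.
\]
The coefficients are small, so the correction is uniformly small on $\overline \D$. Hence $g$ still approximates $f$ off $\bigcup_j U_j$, still satisfies $g(U_j) \subset V_j$ and $g(\overline \D) \subset V$ (these are open conditions with room to spare), and interpolates exactly: $g(a_j) = b_j$ and $g(c_i) = f(c_i)$.

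The delicate bookkeeping is in choosing the order of the constants:
\begin{enumerate}
\item the approximation tolerance $\epsilon$ and the neighbourhoods $V_j$, $V$ are given;
\item choose $r$;
\item choose the arcs $\gamma_j$ and the Mergelyan approximant $h$, so that $h$ maps a neighbourhood of $\gamma_j \cup (\overline U_j' \cap \overline\D)$ into $V_j$;
\item choose $\phi$.
\end{enumerate}
The neighbourhood of $\gamma_j$ in step 3 must be fixed before $\phi$ is chosen, so that the image condition from Theorem \ref{thm_spikes} gives $g_0(U_j') \subset V_j$. On $U_j \setminus U_j'$ the map $\phi$ is close to the identity, so there $g_0$ is close to $f$ and lies in $V_j$, because $V_j \supset f(\overline U_j)$.
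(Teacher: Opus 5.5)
Your proposal is correct and is essentially the paper's argument: Mergelyan approximation on $K=\overline \D \cup \bigcup_j \gamma_j$ (with radial segments as the $\gamma_j$) of a map that equals $f$ on $\overline \D$ and runs along $\lambda_j$ on each $\gamma_j$, followed by composition with the finger-attaching diffeomorphism from Theorem \ref{thm_spikes}. The differences are only cosmetic. The dilation $f_r$ is unnecessary, since Mergelyan's theorem applies directly to $f$, which is continuous on $\overline \D$ and holomorphic on $\D$. You restore the exact values at $a_j$ and $c_i$ by a Lagrange-polynomial correction after composing, whereas the paper builds the interpolation $h(ra_j)=b_j$, $h(c_i)=f(c_i)$ into the Mergelyan step and then uses $\sigma(a_j)=ra_j$, $\sigma(c_i)=c_i$ exactly; this is the same correction, just performed earlier.
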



\begin{proof}
	
	Let $\epsilon > 0$. Fix $r > 1$ and let $L_j \subset \C$ be the line segment from $a_j$ to $ra_j$, $j = 1,\dots,k$. Let $K = \overline \D \cup \bigcup_{j=1}^kL_j$ and define $f' : K \to \C$ by extending $f$ continuously to each of the $L_j$ such that $L_j$ maps to $\lambda_j$ with $f'(a_j) = f(a_j)$ and $f'(ra_j) = b_j$.
	
	By Mergelyan's theorem (with interpolation) we can approximate $f'$ by an entire function $h : \C \to \C^n$ such that, for $j = 1,\dots,k$ and $i = 1,\dots,s$:
	
	\begin{enumerate}[(i)]
		\item $\lVert h - f' \rVert_{K} < \epsilon/2$,
		\item $h(ra_j) = f'(ra_j) = b_j$,
		\item $h(c_i) = f'(c_i) = f(c_i)$,
		\item $h(\overline U_j \cup L_j) \subset V_j$,
		\item $h(K) \subset V$.
	\end{enumerate}
	
	Restrict $h$ to a relatively compact neighbourhood $W$ of $K$ such that $h(W) \subset V$. For $j = 1,\dots,k$, let $W_j \subset W$ be a neighbourhood of $\overline U_j \cup L_j$ such that $h(\overline W_j) \subset V_j$. By the uniform continuity of $h$ on $W$ there exists $\delta > 0$ such that, for $z,w \in W$, $\lvert z - w \rvert < \delta$ implies $\lvert h(z) - h(w)\lvert < \epsilon/2$.
	
	By Theorem \ref{thm_spikes} there exists a smooth diffeomorphism $\sigma :\overline \D \to \sigma(\overline \D)$ which is holomorphic on $\D$ and satisfies
	\begin{itemize}
		\item $\sigma(\overline \D) \subset W$,
		\item $\lVert \sigma - \id \rVert_{\overline\D\setminus \bigcup_{j = 1}^k U_j} < \delta$,
		\item $\sigma(a_j) = ra_j$ for $j = 1,\dots,k$,
		\item $\sigma(c_i) = c_i$,
		\item $\sigma(\overline U_j) \subset W_j$.
	\end{itemize}
	
	Note: Strictly speaking, Theorem \ref{thm_spikes} requires the neighbourhoods $U_j$ to be open subsets of $W$, but it is clear that given a relatively open set $U_j \subset \overline \D$ satisfying $\overline U_j \subset W_j$, there exists an open set $U'_j \subset \C$ such that $U_j = U'_j \cap \overline \D$ and $\overline U'_j \subset W_j$.
	
	Define $g : \overline \D \to \C^n$ by $g = h \circ \sigma$. Then for $z \in \overline\D\setminus \bigcup_{j = 1}^k U_j$ we have
	\begin{equation*}
		\lvert g(z) - f(z) \rvert = \lvert h(\sigma(z)) - f(z)\rvert \le \lvert h(\sigma(z)) - h(z) \rvert + \lvert h(z) - f(z)\rvert < \epsilon\,.
	\end{equation*}
	It is immediate to check that $g$ satisfies the remaining conditions as required.
\end{proof}

\section{Main result for maps into $\C^n$}

\noindent
We now establish a version of our main theorem for maps into $\C^n$. In the following section we state and prove a corresponding result for maps into arbitrary connected complex manifolds.

\begin{theorem}
	\label{thm_main_Cn}
	Let $f : \overline \D \to \C^n$ $(n \ge 1)$ be a continuous map that is holomorphic on $\D$. Let $E \subset S$ be a closed set of measure zero and let $\varphi : E \to \C^n$ be a continuous map. Let $c \in \D$. Then, $f$ can be uniformly approximated on compact subsets of $\overline \D \setminus E$ by continuous maps $F:\overline \D \to \C^n$ that are holomorphic on $\D$, such that $F\vert_E = \varphi$ and $F(c) = f(c)$. Furthermore, if $U \subset \C^n$ is any connected neighbourhood of $f(\overline \D) \cup \varphi(E)$ then it is possible to ensure that $F(\overline \D) \subset U$.
\end{theorem}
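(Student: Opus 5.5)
The plan is to build $F$ as the uniform limit on $\overline\D$ of a sequence $F_0 = f, F_1, F_2, \dots$ of continuous maps $\overline\D \to \C^n$ that are holomorphic on $\D$ and satisfy $F_m(c) = f(c)$. The error $\eta_m = \sup_{z\in E}\lvert F_m(z) - \varphi(z)\rvert$ should tend to zero, and the step sizes $\lVert F_{m+1}-F_m\rVert_{\overline\D}$ should be summable. Each step has two parts. First, Theorem \ref{thm_ForstnericWold} attaches fingers at finitely many points of $S \setminus E$ so that they hit the target values. Second, precomposing with a map from Theorem \ref{thm_RudinCarleson_general_c} moves the points of $E$ onto those finger tips.

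For a single step, fix $m$ and a small $\rho_m>0$. Since $E$ is closed of measure zero, it is nowhere dense and contains no arc. Hence $E$ can be written as a finite disjoint union of relatively clopen pieces $E_1,\dots,E_k$ of diameter less than $\rho_m$, each lying in an arc $I_j\subset S$ of length less than $\rho_m$. Each arc $I_j$ also contains a point $a_j \in S\setminus E$. I choose these pieces and points so that the $I_j$ are disjoint. Choose $e_j\in E_j$.

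\begin{enumerate}[(1)]
\item Apply Theorem \ref{thm_ForstnericWold} to $F_m$ with $b_j=\varphi(e_j)$ and with small pairwise disjoint neighbourhoods $U_j\ni a_j$, taking $U_j \cap E = \emptyset$. For $\lambda_j$ take a short arc: the straight segment from $F_m(a_j)$ to $\varphi(e_j)$ has length at most $\eta_m + \operatorname{osc}$, where the oscillation term is that of $F_m$ and $\varphi$ on sets of diameter $\rho_m$. For $m=0$, instead take $\lambda_j$ an arbitrary path inside $U$, which exists because $U$ is connected.
\item Choose the neighbourhoods $V_j$ to be small tubes around $F_m(\overline U_j)\cup\lambda_j$ inside $U$. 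Interpolate at $c$ and approximate well on $\overline\D\setminus\bigcup U_j$. This gives $H_m$ with $H_m(a_j)=\varphi(e_j)$, $H_m(c)=f(c)$, and $\lVert H_m-F_m\rVert_{\overline\D}\lesssim \eta_m+\operatorname{osc}$ for $m\ge1$.
\item By continuity of $H_m$ at $a_j$, choose $\delta_m$ by Theorem \ref{thm_RudinCarleson_general_c} so that $\lvert g(z)-z\rvert<\delta_m$ on $\overline\D$ forces $\lVert H_m\circ g - H_m\rVert_{\overline\D}$ to be tiny.
\item Define the continuous map $\psi\colon E\to S$ by $\psi\equiv a_j$ on $E_j$. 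It is continuous because the pieces are clopen, and $\lvert\psi(z)-z\rvert<\rho_m$. If $\rho_m$ is chosen small relative to $\delta_m$, Theorem \ref{thm_RudinCarleson_general_c} gives $g_m$ with $g_m|_E=\psi$, $g_m(c)=c$ and $g_m(\overline\D)\subset\overline\D$.
\item Set $F_{m+1}=H_m\circ g_m$. Then $F_{m+1}(c)=f(c)$, and $F_{m+1}(z)=\varphi(e_j)$ for $z\in E_j$, so $\eta_{m+1}\le\operatorname{osc}(\varphi,\rho_m)$.
\end{enumerate}

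There is a circularity in this order of choices. The value $\delta_m$ depends on $H_m$, while the partition (hence $\rho_m$) must be fixed before $H_m$ is built. I would resolve it as follows. First fix $\rho_m$ and the partition using only the data already available. Then build $H_m$. Then, instead of shrinking $\rho_m$, use the fact that $H_m$ is continuous at the finitely many points $a_j$: any $g$ with $g(E_j)=\{a_j\}$ and $\lvert g-\id\rvert$ small gives $H_m\circ g$ uniformly close to $H_m$. The required closeness of $\psi$ to the identity is only $\rho_m$, so I apply Theorem \ref{thm_RudinCarleson_general_c} with $\epsilon=\epsilon_m$ and take the $\rho_m$ that it demands beforehand, where $\epsilon_m$ is a fixed summable sequence. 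This in turn bounds $\lVert H_m\circ g_m-H_m\rVert$ by the modulus of continuity of $H_m$ at scale $\epsilon_m$. I expect this bookkeeping to be the main obstacle: I must ensure that the modulus of continuity of $H_m$ does not blow up, which I would control by choosing $\epsilon_m$ after $H_m$ at each stage in a two-level scheme.

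Once the estimates are in place, the steps $\lVert F_{m+1}-F_m\rVert_{\overline\D}$ are summable for $m\ge1$, so $F_m\to F$ uniformly. The limit $F$ is continuous on $\overline\D$, holomorphic on $\D$, satisfies $F(c)=f(c)$, and has $F|_E=\varphi$ because $\eta_m\to0$ and $F_{m+1}|_E$ is exact on the representatives. For a compact set $K\subset\overline\D\setminus E$, choose the neighbourhoods $U_j$ (and $\rho_m$) so small that $K$ avoids $\bigcup U_j$ and their preimages under the $g_m$. Then every step, including the large first step, is small on $K$, so $F$ approximates $f$ on $K$. Finally, $F(\overline\D)\subset U$ holds if every tube $V_j$ and every $\lambda_j$ is taken inside $U$, with margins shrinking summably away from $\partial U$.
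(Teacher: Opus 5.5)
Your overall architecture matches the paper's: collapse small clopen pieces of $E$ to single boundary points by a Rudin--Carleson map close to the identity, attach short fingers there, compose, and iterate. But the proof breaks down at exactly the estimate you defer as ``bookkeeping'', and the fix you sketch does not work.

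In step (3) you want $\delta_m$, chosen from $H_m$, such that $\lVert H_m\circ g_m-H_m\rVert_{\overline\D}$ is tiny. This cannot be achieved together with $g_m|_E=\psi$. For $z\in E_j$ you have $z\notin U_j$, so $H_m(z)\approx F_m(z)$, while $H_m(g_m(z))=\varphi(e_j)$. Hence, up to the approximation error of $H_m$ off $\bigcup_j U_j$, $\lVert H_m\circ g_m-H_m\rVert_{\overline\D}\gtrsim\eta_m-\omega_\varphi(\rho_m)$ however $\delta_m$ is chosen, where $\omega_\varphi$ is the modulus of continuity of $\varphi$. At $m=0$ this is essentially $\lVert f-\varphi\rVert_E$.

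Your proposed way out is circular as stated. $\epsilon_m$ determines $\rho_m$ through Theorem \ref{thm_RudinCarleson_general_c}, $\rho_m$ determines the partition and the targets $\varphi(e_j)$, and these determine $H_m$; so $\epsilon_m$ cannot be chosen after $H_m$. Continuity of $H_m$ at the finitely many points $a_j$ is also not enough. Uniform control of $H_m\circ g-H_m$ needs the modulus of continuity of $H_m$ on all of $\overline\D$, and that modulus is poor because $H_m$ runs along the whole finger $\lambda_j$ inside the small set $U_j$.

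The missing idea is to compare $F_{m+1}$ with $F_m$, never with $H_m$. Since $g_m(\overline\D)\subset\overline\D$,
\[
\lVert H_m\circ g_m-F_m\rVert_{\overline\D}\le\lVert H_m-F_m\rVert_{\overline\D}+\lVert F_m\circ g_m-F_m\rVert_{\overline\D}.
\]
The second term involves only the modulus of continuity of $F_m$, which is known at the start of the step. So you can fix $\epsilon_m$ from $F_m$ and $\varphi$, then $\rho_m$ from Theorem \ref{thm_RudinCarleson_general_c}, then the partition, the fingers and $g_m$. There is no circularity, and no information about $H_m$ is needed. The first term is the bound $\lesssim\eta_m+\mathrm{osc}$ from your step (2) when $m\ge1$. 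For $m=0$ you only need the estimate on $K$: taking $\epsilon_0,\rho_0$ small compared with $\dist(K,E)$ gives $g_0(K)\cap\bigcup_j U_j=\varnothing$, so $H_0\circ g_0\approx F_0\circ g_0$ on $K$.

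This is exactly how the paper argues. It fixes $\delta_{j+1}$ from the uniform continuity of $f_j$ and $\varphi$, builds the collapsing map $\alpha_{j+1}$, and only then attaches fingers at the representatives $a^{(j+1)}_i\in E$. Since $f_j$ equals $\varphi(a^{(j)}_{i'})$ exactly on the old piece, both the finger and $f_j(\overline{U^{(j+1)}_i})$ lie in $B(\varphi(a^{(j)}_{i'});\eta_j)$. This gives $\lVert g_{j+1}-f_j\rVert_{\overline\D}<2\eta_j$, and the triangle inequality through $f_j\circ\alpha_{j+1}$ closes the step.

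With this estimate your variant (fingers at points of $S\setminus E$, Theorem \ref{thm_RudinCarleson_general_c} for general $c$) goes through, at the cost of an extra oscillation term. You should also note that $g_m$ is nonconstant, so $g_m(\D)\subset\D$ by the open mapping theorem and $H_m\circ g_m$ is holomorphic on $\D$.
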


\begin{proof}
	Suppose that $c = 0$ (the general case for arbitrary $c \in \D$ is handled similarly, using Theorem \ref{thm_RudinCarleson_general_c} in place of Theorem \ref{thm_RudinCarleson}). Let $K \subset \overline{\D} \setminus E$ be a compact set and let $\epsilon > 0$. Let $V \Subset U \subset \C^n$ be a connected neighbourhood of $f(\overline \D) \cup \varphi(E)$. Define a sequence $\eta_j > 0$, $j = 1,2,\dots$, by $\eta_j = \epsilon/2^{j+3}$, shrinking $\epsilon$ first if necessary such that for all $z \in E$, the ball $B(\varphi(z); \eta_1)$ of radius $\eta_1$ centred at $\varphi(z)$ is contained in $V$.
	
	Let $f_0 = f$. By induction we will construct for each $j > 0$ a continuous map $f_j : \overline{\D} \to \C^n$ that is holomorphic on $\D$, a partition $(E_i^{(j)})_{i=1}^{k_j}$ of $E$ into pairwise disjoint closed subsets, and points $(a_i^{(j)})_{i=1}^{k_j}$ with $a_i^{(j)} \in E_i^{(j)}$, $i = 1,\dots,k_j$, such that the following conditions are satisfied.
	
	\begin{enumerate}[(a)]
		\item \label{conv_E} $\lVert f_j - \varphi \rVert_E < \eta_j$ for all $j > 0$,
		\item \label{conv_K} $\lVert f_j - f_{j-1} \rVert_K < \epsilon/2^j$ for all $j > 0$,
		\item \label{conv_D} $\lVert f_j - f_{j-1} \rVert_{\overline{\D}} < \epsilon/2^j$ for all $j > 1$,
		\item \label{fix_point} $f_j(0) = f(0)$ for all $j \ge 0$,
		\item \label{image_E} $f_j(E_i^{(j)}) = \varphi(a_i^{(j)})$ for $i = 1, \dots, k_j$, for all $j > 0$,
		\item \label{control_image} $f_j(\overline{\D}) \subset V$ for all $j > 0$.
	\end{enumerate}
	
	
	Suppose that such a construction has been completed. It follows from (\ref{conv_D}) that $(f_j)_{j = 1}^\infty$ is a uniformly Cauchy sequence on $\overline{\D}$. Defining $F := \lim\limits_{j \to \infty} f_j$, we therefore have that $F : \overline{\D} \to \C^n$ is a continuous map whose restriction to $\D$ is holomorphic, and $F(0) = f(0)$ by (\ref{fix_point}). By (\ref{conv_E}) we have that $F\vert_E = \varphi$. From (\ref{conv_K}) it follows that $\lVert F - f \rVert_K \le \epsilon$. Finally, (\ref{control_image}) gives that $F(\overline{\D}) \subset \overline{V} \subset U$.
	
	Let $j = 1$. We construct a continuous map $f_1 : \overline{\D} \to \C^n$ that is holomorphic on $\D$, a partition $(E_i^{(1)})_{i=1}^{k_1}$ of $E$ into closed subsets, and points $(a_i^{(1)})_{i=1}^{k_1}$ such that $a_i^{(1)} \in E_i^{(1)}$ for $i = 1,\dots,k_1$, satisfying conditions (\ref{conv_E}), (\ref{conv_K}), (\ref{fix_point}), (\ref{image_E}), and (\ref{control_image}). (Condition (\ref{conv_D}) does not apply when $j = 1$.)
		
	By the uniform continuity of $f_0$ on $\overline{\D}$ there exists $\delta_1$ such that, for $x, y \in \overline{\D}$,
	\begin{equation}
		\label{unif_cont_f_0}
		\lvert x - y \rvert < \delta_1 \implies \lvert f_0(x) - f_0(y) \rvert < \epsilon/4\,.
	\end{equation}
	Shrinking $\delta_1$, if necessary, we may further assume by the uniform continuity of $\varphi$ on $E$ that, for $x, y \in E$,
	\begin{equation}
		\label{unif_cont_varphi_eta_1}
		\lvert x - y \rvert < \delta_1 \implies \lvert \varphi(x) - \varphi(y) \rvert < \eta_1\,.
	\end{equation}
	We also ensure that $\delta_1$ is chosen sufficiently small so that $\delta_1 < \dist(K, E)$.
	
	
	Partition the set $E$ into finitely many pairwise disjoint closed sets $E^{(1)}_1, \dots, E^{(1)}_{k_1}$ such that $\diam(E^{(1)}_i) < \delta_1$ for $i = 1,\dots,k_1$. For each $i = 1,\dots,k_1$ we choose a point $a^{(1)}_i \in E^{(1)}_i$. Defining $\alpha_1 : E \to \partial \D$ by $\alpha_1(E^{(1)}_i) = a^{(1)}_i$, $i = 1,\dots,k_1$, we have $\lVert \alpha_1 - \id \rVert_E < \delta_1$. By Theorem \ref{thm_RudinCarleson}, $\alpha_1$ extends to a continuous map $\alpha_1 : \overline{\D} \to \overline{\D}$ (which we still denote by $\alpha_1$), holomorphic on $\D$, such that $\lVert \alpha_1 - \id \rVert_{\overline{\D}} < \delta_1$ and $\alpha_1(0) = 0$. Without loss of generality we may assume that $\alpha_1$ is not constant. It then follows by the open mapping theorem that $\alpha_1(\D) \subset \D$. From the fact that $\delta_1 < \dist(K, E)$ we also see that $\alpha_1(K)$ is a compact subset of $\overline{\D}\setminus E$.
	
	For each $i = 1, \dots, k_1$, let $U^{(1)}_i \subset \overline{\D}$ be a (relatively) open neighbourhood of the point $a^{(1)}_i \in E \subset \partial \D$ such that $U^{(1)}_i \cap \alpha_1(K) = \varnothing$, and let $\gamma^{(1)}_i \subset V$ be an arc starting at $f_0(a^{(1)}_i) = f(a^{(1)}_i)$ and ending at $\varphi(a^{(1)}_i)$. By Theorem \ref{thm_ForstnericWold} there exists a continuous map $g_1 : \overline{\D} \to \C^n$, holomorphic on $\D$, with the following properties.
	
	\begin{itemize}
		\item $\lVert f_0 - g_1 \rVert_{\overline{\D} \setminus \bigcup_{i = 1}^{k_1} U^{(1)}_i} < \epsilon / 4$.
		\item $g_1(a^{(1)}_i) = \varphi(a^{(1)}_i)$ for $i = 1,\dots,k_1$.
		\item $g_1(0) = f_0(0) = f(0)$.
		\item $g_1(\overline{\D}) \subset V$.
	\end{itemize}
	
	Let $f_1 = g_1 \circ \alpha_1$. Then, $f_1 : \overline{\D} \to \C^n$ is a continuous map whose restriction to $\D$ is holomorphic, and $f_1$ satisfies properties (\ref{conv_E}), (\ref{conv_K}), (\ref{fix_point}), (\ref{image_E}), and (\ref{control_image}). Indeed, we have $f_1(\overline{\D}) = g_1(\alpha_1(\overline{\D})) \subset g_1(\overline{\D}) \subset V$. We also have $f_1(0) = g_1(\alpha_1(0)) = g_1(0) = f(0)$. Let $z \in E$, then $z \in E^{(1)}_i$ for some $i \in \{1,\dots,k_1\}$. We then have $f_1(z) = g_1(\alpha_1(z)) = g_1(a^{(1)}_i) = \varphi(a^{(1)}_i)$. But then (\ref{unif_cont_varphi_eta_1}) gives $\lvert f_1(z) - \varphi(z) \rvert = \lvert \varphi(a^{(1)}_i) - \varphi(z) \rvert < \eta_1$, since $a^{(1)}_i, z \in E^{(1)}_i$ and $\diam(E^{(1)}_i) < \delta_1$. Let $z \in K$, then
	\begin{eqnarray*}
		\lvert f_1(z) - f_0(z) \rvert & = & \lvert g_1(\alpha_1(z)) - f_0(z) \rvert \\
		& \le & \lvert g_1(\alpha_1(z)) - f_0(\alpha_1(z)) \rvert + \lvert f_0(\alpha_1(z)) - f_0(z) \rvert \\
		& < & \epsilon / 4 + \epsilon / 4 = \epsilon / 2
	\end{eqnarray*}
	where we have used that $\alpha_1(z) \in \alpha_1(K) \subset \overline{\D} \setminus \bigcup_{i = 1}^{k_1} U^{(1)}_i$, and (\ref{unif_cont_f_0}) together with $\lvert \alpha_1(z) - z \rvert < \delta_1$. This completes the first step of the induction.
	
	Suppose now that for some $j > 0$ we have a continuous map $f_j : \overline{\D} \to \C^n$ that is holomorphic on $\D$, a partition $(E_i^{(j)})_{i=1}^{k_j}$ of $E$ into closed subsets, and points $(a_i^{(j)})_{i=1}^{k_j}$ with $a_i^{(j)} \in E_i^{(j)}$ for $i = 1,\dots,k_j$, such that conditions (\ref{conv_E})--(\ref{control_image}) are satisfied (excluding condition (\ref{conv_D}) if $j = 1$). We  construct a continuous map $f_{j+1} : \overline{\D} \to \C^n$ that is holomorphic on $\D$, a partition $(E_i^{({j+1})})_{i=1}^{k_{j+1}}$ of $E$ into pairwise disjoint closed subsets, and points $(a_i^{({j+1})})_{i=1}^{k_{j+1}}$ with $a_i^{({j+1})} \in E_i^{({j+1})}$ for $i = 1,\dots,k_{j+1}$, satisfying conditions (\ref{conv_E})--(\ref{control_image}).
		
	By the uniform continuity of $f_j$ on $\overline{\D}$ there exists $\delta_{j+1} > 0$ such that, for $x, y \in \overline{\D}$,
	\[\lvert x - y \rvert < \delta_{j+1} \implies \lvert f_j(x) - f_j(y) \rvert < \epsilon/2^{j+2}\,.\]
	Shrinking $\delta_{j+1}$ further if necessary, it follows by the uniform continuity of $\varphi$ on $E$ that, for $x, y \in E$,
	\[\lvert x - y \rvert < \delta_{j+1} \implies \lvert \varphi(x) - \varphi(y) \rvert < \eta_{j+1}\,.\]
	
	Partitioning each $E^{(j)}_i$, $i = 1, \dots, k_j$, into sufficiently small closed subsets gives a collection of pairwise disjoint sets $E^{({j+1})}_i$, $i = 1, \dots, k_{j+1}$, satisfying $\bigcup_{i = 1}^{k_{j+1}} E^{({j+1})}_i = \bigcup_{i' = 1}^{k_{j}} E^{({j})}_{i'} = E$, $\diam(E^{({j+1})}_i) < \delta_{j+1}$ for $i = 1,\dots,k_{j+1}$, and such that for each $i \in \{1,\dots,k_{j+1}\}$ there exists a unique value $i' \in \{1, \dots, k_j\}$ for which $E^{({j+1})}_i \subset E^{(j)}_{i'}$. For each $i = 1,\dots,k_{j+1}$ we choose a point $a^{({j+1})}_i \in E^{({j+1})}_i$. Define $\alpha_{j+1} : E \to \partial \D$ by $\alpha_{j+1}(E^{({j+1})}_i) = a^{({j+1})}_i$ for $i = 1,\dots,k_{j+1}$. By Theorem \ref{thm_RudinCarleson}, $\alpha_{j+1}$ extends to a continuous map $\alpha_{j+1} : \overline{\D} \to \overline{\D}$, holomorphic on $\D$, such that $\lVert \alpha_{j+1} - \id \rVert_{\overline{\D}} < \delta_{j+1}$, $\alpha_{j+1}(0) = 0$, and $\alpha_{j+1}(\D) \subset \D$.
	
	Given $i \in \{1,\dots,k_{j+1}\}$, let $i' \in \{1,\dots,k_j\}$ be the unique value such that $E^{({j+1})}_i \subset E^{(j)}_{i'}$. By condition (\ref{image_E}) it follows that $f_j(a^{({j+1})}_i) = \varphi(a^{(j)}_{i'})$. By condition (\ref{conv_E}) we also have that $\lvert f_j(a^{({j+1})}_i) - \varphi(a^{({j+1})}_i) \rvert < \eta_j$. Choose an arc from the point $f_j(a^{({j+1})}_i) = \varphi(a^{(j)}_{i'})$ to $\varphi(a^{({j+1})}_i)$ that is contained in the open ball $B(\varphi(a^{(j)}_{i'});\eta_j) \subset V$ of radius $\eta_j$ centred at $\varphi(a^{(j)}_{i'})$. Choose a neighbourhood $U^{({j+1})}_i$ of $a^{({j+1})}_i$ in $\overline{\D}$ such that $f_j(\overline{U^{({j+1})}_i}) \subset B(\varphi(a^{(j)}_{i'});\eta_j)$.
	
	Perform the preceding construction for each $i = 1, \dots, k_{j+1}$. We now apply Theorem \ref{thm_ForstnericWold} to $f_j$, with $V_i = B(\varphi(a^{(j)}_{i'});\eta_j)$, $i = 1,\dots,k_{j+1}$ (here, $i'$ depends on $i$ as described above). We thereby obtain a continuous map $g_{j+1} : \overline{\D} \to \C^n$, holomorphic on $\D$, such that 
	\begin{itemize}
		\item $\lVert f_j - g_{j+1} \rVert_{\overline{\D} \setminus \bigcup_{i = 1}^{k_{j+1}} U^{({j+1})}_i} < \epsilon / 2^{j+2}$.
		\item $g_{j+1}(a^{({j+1})}_i) = \varphi(a^{({j+1})}_i)$ for $i = 1,\dots,k_{j+1}$.
		\item $g_{j+1}(0) = f_j(0)$.
		\item $g_{j+1}(U^{({j+1})}_i) \subset V_i = B(\varphi(a^{(j)}_{i'});\eta_j)$ for $i = 1,\dots,k_{j+1}$, where, for each $i$, we let $i' \in \{1,\dots,k_j\}$ be the unique value such that $E^{({j+1})}_i \subset E^{(j)}_{i'}$.
		\item $g_{j+1}(\overline{\D}) \subset V$ (here we use the fact from the beginning of the proof that\\ $B(\varphi(a^{(j)}_{i'});\eta_j) \subset B(\varphi(a^{(j)}_{i'});\eta_1) \subset V$).
	\end{itemize}
	
	Let $z \in \overline{\D}$. If $z \in {\overline{\D} \setminus \bigcup_{i =1}^{k_{j+1}} U^{({j+1})}_i}$ then $\lvert f_j(z)-g_{j+1}(z)\rvert < \epsilon/2^{j+2}$ from the first property of $g_{j+1}$ above. Otherwise, suppose $z \in U^{({j+1})}_i$ for some $i = 1, \dots, k_{j+1}$. Then, for the appropriate value of $i'$ we have that $f_j(z) \in f_j(U^{({j+1})}_i) \subset B(\varphi(a^{(j)}_{i'});\eta_j)$ and $g_{j+1}(z) \in g_{j+1}(U^{({j+1})}_i) \subset B(\varphi(a^{(j)}_{i'});\eta_j)$. Thus, $\lvert f_j(z) - g_{j+1}(z) \rvert < \diam(B(\varphi(a^{(j)}_{i'});\eta_j)) = 2\eta_j = \epsilon/2^{j+2}$. We therefore see that $\lVert f_j - g_{j+1} \rVert_{\overline{\D}} < \epsilon/2^{j+2}$.
	
	Let $f_{j+1} = g_{j+1} \circ \alpha_{j+1}$. We now show that $f_{j+1}$ has all required properties. Clearly, $f_{j+1}$ is continuous on $\overline{\D}$, holomorphic on $\D$, and satisfies both $f_{j+1}(\overline{\D}) \subset V$ and $f_{j+1}(0) = f_j(0) = f(0)$. Let $z \in E$, so that $z \in E^{({j+1})}_i$ for a unique value $i \in \{1,\dots,k_{j+1}\}$. We have $f_{j+1}(z) = g_{j+1}(\alpha_{j+1}(z)) = g_{j+1}(a^{({j+1})}_i) = \varphi(a^{({j+1})}_i)$, and therefore
	\begin{equation*}
		\lvert f_{j+1}(z) - \varphi(z)\rvert  = \lvert \varphi(a^{({j+1})}_i) - \varphi(z) \rvert < \eta_{j+1}
	\end{equation*}
	since $a^{({j+1})}_i, z \in E^{({j+1})}_i$ and $\diam(E^{({j+1})}_i) < \delta_{j+1}$. This shows that $\lVert f_{j+1} - \varphi \rVert_E < \eta_{j+1}$.
	
	Finally, for $z \in \overline{\D}$,
	\begin{eqnarray*}
		\lvert f_{j+1}(z) - f_j(z) \rvert &=& \lvert g_{j+1}(\alpha_{j+1}(z)) - f_j(z) \rvert\\
		&\le& \lvert g_{j+1}(\alpha_{j+1}(z)) - f_j(\alpha_{j+1}(z)) \rvert + \lvert f_j(\alpha_{j+1}(z))- f_j(z) \rvert\\
		& < & \epsilon/2^{j+2} + \epsilon/2^{j+2} = \epsilon/2^{j+1}
	\end{eqnarray*}		
	since $\lvert \alpha_{j+1}(z) - z \rvert < \delta_{j+1}$. Thus, $\lVert f_{j+1} - f_j \rVert _{\overline{\D}} < \epsilon/2^{j+1}$. The same estimate holds on $K \subset \overline{\D}$.
\end{proof}

\section{Main result for general complex manifold targets}

\noindent
We now give the main result of our paper, generalising Theorem \ref{thm_main_Cn} to the case where the target $\C^n$ is replaced by an arbitrary connected complex manifold $X$.

\begin{theorem}
	\label{thm_main_generaltarget}
	Let $X$ be a connected complex manifold equipped with a distance function induced by a complete Riemannian metric.	Let $f : \overline \D \to X$ be a continuous map that is holomorphic on $\D$. Let $E \subset \partial \D$ be a closed set of measure zero and let $\varphi : E \to X$ be a continuous map. Let $c \in \D$. Then, $f$ can be uniformly approximated on compact subsets of $\overline \D \setminus E$ by continuous maps $F:\overline \D \to X$ that are holomorphic on $\D$, such that $F\vert_E = \varphi$ and $F(c) = f(c)$. Furthermore, if $U \subset X$ is any connected neighbourhood of $f(\overline \D) \cup \varphi(E)$ then it is possible to ensure that $F(\overline \D) \subset U$.
\end{theorem}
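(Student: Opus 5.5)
The plan is to redo the inductive scheme of Theorem \ref{thm_main_Cn} inside $X$. Euclidean estimates are replaced by estimates in the Riemannian distance $d$, and the $\C^n$ tool, Theorem \ref{thm_ForstnericWold}, is applied only locally, in holomorphic coordinate charts. As before, I assume $c=0$; the general case uses Theorem \ref{thm_RudinCarleson_general_c} instead of Theorem \ref{thm_RudinCarleson}. Fix $K\subset\overline\D\setminus E$ compact and $\epsilon>0$, and choose a connected neighbourhood $V\Subset U$ of $f(\overline\D)\cup\varphi(E)$; completeness of the metric makes $\overline V$ compact. Shrinking $\epsilon$, I may assume every metric ball of radius $\eta_1=\epsilon/2^4$ centred at a point of $\overline V$ lies in $U$ and in a single holomorphic coordinate chart. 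In each such chart the metric is bi-Lipschitz to the Euclidean one with uniform constants. The target sequence $f_j$ must satisfy the analogues of conditions (a)--(f), with $\lVert\cdot\rVert$ replaced by $\sup d(\cdot,\cdot)$. The limit is then handled exactly as before, using completeness of $X$ to pass from uniformly Cauchy to uniformly convergent.

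The inductive steps $j\to j+1$ for $j\ge1$ should go through almost verbatim. The needed arcs join $f_j(a_i^{(j+1)})=\varphi(a_{i'}^{(j)})$ to the point $\varphi(a_i^{(j+1)})$, which is $\eta_j$-close, so all the modification happens inside small balls $B_i=B(\varphi(a^{(j)}_{i'});\eta_j)$, each lying in a chart $\cong$ a ball in $\C^n$. The obstacle is that Theorem \ref{thm_ForstnericWold} needs a globally $\C^n$-valued map. The remedy is the following local-to-global splice. Take small disjoint closed discs (or half-discs) $\Delta_i\subset\overline\D$ around $a_i^{(j+1)}$, with $f_j(\Delta_i)\subset B_i$. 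Apply the finger construction in chart coordinates to $f_j|_{\Delta_i}$, composed with a Riemann map $\overline\D\to\Delta_i$ and fixing finitely many points; this produces $h_i$ close to $f_j$ on $\partial\Delta_i\cap\D$ away from the finger point, with $h_i(a_i^{(j+1)})=\varphi(a_i^{(j+1)})$. The $h_i$ must then be glued back to $f_j$ holomorphically. To do this I would instead use a global trick. Since $f_j(\overline\D)$ need not lie in one chart, I would find a Stein neighbourhood of the graph of $f_j$ in $\C\times X$, by Siu's theorem applied to an open disc slightly larger than $\overline\D$ if $f_j$ extends. An alternative is to work with sprays: use a dominating holomorphic spray $s:\overline\D\times\C^N\supset\overline\D\times B\to X$ over $f_j$ with $s(z,0)=f_j(z)$, as in Forstneri\v c's theory of holomorphic discs. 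Such sprays exist for continuous-up-to-the-boundary discs holomorphic inside, after slight shrinking handled by the uniform estimates. Maps close to $f_j$ then correspond to small $\C^N$-valued functions $t$, via $z\mapsto s(z,t(z))$. The finger at each $a_i^{(j+1)}$ is a small $\C^N$-valued function. Its values at the points $a_i$ are prescribed, and by domination these can be chosen in the fibre directions so that $s(a_i,t(a_i))=\varphi(a_i^{(j+1)})$. The function $t$ is obtained from Theorem \ref{thm_ForstnericWold} applied to the zero map into $\C^N$, with small target neighbourhoods, and with $t(0)=0$ to keep the centre fixed. The closeness and image controls transfer through $s$ by uniform continuity.

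The first step $j=0\to1$ is where arcs are long: $\gamma_i^{(1)}$ joins $f(a_i^{(1)})$ to $\varphi(a_i^{(1)})$ in $V$, possibly far away and across many charts. Here I would subdivide each arc into finitely many short pieces of length $<\eta_1$. I would then perform finitely many successive finger-extensions by the spray method just described, each moving the image of $a_i^{(1)}$ one short step along $\gamma_i^{(1)}$, each being small except on shrinking neighbourhoods $U_i^{(1)}$ disjoint from $\alpha_1(K)$, and each fixing $0$. Because the prescribed values $t(a_i)$ need not be small in this step, the spray must be rebuilt around the new disc after each step. I expect this first step — controlling the iterated sprays so that the final $g_1$ stays close to $f$ off $\bigcup_i U_i^{(1)}$, maps into $V$, and hits $\varphi(a_i^{(1)})$ exactly — to be the main technical obstacle. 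Once $g_1$ exists, $f_1=g_1\circ\alpha_1$ and the estimates are identical to those in the proof of Theorem \ref{thm_main_Cn}.
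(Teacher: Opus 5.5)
\textbf{There is a genuine gap: the $X$-valued finger-attaching step is not established.} Your inductive skeleton is the paper's own. The paper proves the statement by rerunning the proof of Theorem \ref{thm_main_Cn} with the distance $d$ in place of $\lvert\cdot\rvert$. The only new ingredient is the $X$-valued finger lemma, Theorem \ref{thm_ForstnericWold_generaltarget}, and that is exactly what your proposal leaves unproved.

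The spray device has a quantitative defect you do not address. Domination only says that $t\mapsto s(a_i,t)$ is a submersion at $t=0$. So $s(a_i,B)$ contains \emph{some} neighbourhood of $f_j(a_i)$, but its size depends on the spray, which is built only after $f_j$ exists. In the scheme of Theorem \ref{thm_main_Cn}, the distance to bridge at step $j+1$ is only known to be $<\eta_j=\epsilon/2^{j+3}$, fixed in advance, and refining the partition at step $j+1$ cannot shrink it. Nothing guarantees $\varphi(a_i^{(j+1)})\in s(a_i,B)$, so even the steps $j\ge 1$ are not ``verbatim''.

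At the first step the problem is worse. Each rebuilt spray has its own uncontrolled reach. Neither a predetermined subdivision of $\gamma_i^{(1)}$ into pieces of length $<\eta_1$, nor adaptive stepping, is guaranteed to reach $\varphi(a_i^{(1)})$ in finitely many moves, and you leave control of the iterated maps open. Separately, Siu's theorem does not apply to the graph of $f_j$, since $f_j$ need not extend past $\overline\D$. Shrinking to $f_j(r\,\cdot)$ may repair that, but not the reach problem.

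\textbf{The missing idea} is to build the whole arcs into the compact set before passing to a Stein neighbourhood, instead of working near the graph of the disc alone.
Extend $f$ to $f'$ on $K=\overline\D\cup\bigcup_j L_j$, mapping the radial segment $L_j=[a_j,ra_j]$ onto $\lambda_j$. The map $f'$ is merely continuous on $K$ and holomorphic on $\D$; by Poletsky's theorem its graph has a basis of Stein neighbourhoods $T\subset\C\times X$. Then:
\begin{enumerate}[(i)]
\item embed $T$ properly by $\Psi:T\to\C^N$;
\item apply Mergelyan's theorem with interpolation at $ra_1,\dots,ra_k,c_1,\dots,c_s$ to $z\mapsto\Psi(z,f'(z))$;
\item compose the result with a holomorphic retraction onto $\Psi(T)$, then $\Psi^{-1}$, then the projection to $X$.
\end{enumerate}
This gives a map $h$, holomorphic near $K$ and uniformly close to $f'$, with $h(ra_j)=b_j$ and $h(c_i)=f(c_i)$ exactly, because the retraction fixes $\Psi(T)$.

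Now set $g=h\circ\phi$, with $\phi$ the finger diffeomorphism from Theorem \ref{thm_spikes}. This $g$ satisfies every conclusion of Theorem \ref{thm_ForstnericWold_generaltarget}, for arcs $\lambda_j$ of arbitrary length, in a single step. With this lemma your step $j=0\to1$ needs no subdivision, the steps $j\ge1$ go through verbatim, and no spray reach has to be controlled.
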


The proof of Theorem \ref{thm_main_generaltarget} follows by the same argument as given earlier for Theorem \ref{thm_main_Cn}, replacing the use of Theorem \ref{thm_ForstnericWold} on attaching fingers to maps $\overline \D \to \C^n$ by the following generalisation for maps $\overline \D \to X$.


\begin{theorem}
	\label{thm_ForstnericWold_generaltarget}
	Let $X$ be a connected complex manifold equipped with a distance function defined by a complete Riemannian metric.
	Let $f : \overline \D \to X$ be a continuous map that is holomorphic on $\D$. Let $a_1, \dots, a_k \in \partial \D$ be distinct points. For $j = 1, \dots, k$, let $U_j \subset \overline \D$ be a (relatively open) neighbourhood of $a_j$ such that $U_1,\dots,U_k$ are pairwise disjoint. Let $b_1, \dots, b_k \in X$. For $j = 1, \dots, k$, let $\lambda_j \subset X$ be a continuous arc from $f(a_j)$ to $b_j$. Let $c_1, \dots, c_s \in \overline \D \setminus \{a_1,\dots,a_k\}$. For $j = 1, \dots, k$, let $V_j \subset X$ be a neighbourhood of $f(\overline U_j) \cup \lambda_j$, and let $V \subset X$ be a neighbourhood of $f(\overline \D) \cup \bigcup_{j=1}^k \lambda_j$. Then, $f$ can be uniformly approximated on $\overline \D \setminus \bigcup_{j=1}^k U_j$ by continuous functions $g : \overline \D \to X$ that are holomorphic on $\D$, such that $g(a_j) = b_j$ for $j = 1 , \dots, k$, $g(U_j) \subset V_j$ for $j = 1 , \dots, k$,
	$g(c_i) = f(c_i)$ for $i = 1,\dots,s$, and $g(\overline \D) \subset V$.
\end{theorem}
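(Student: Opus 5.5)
Since $X$ is not $\C^n$, the Mergelyan step in the proof of Theorem \ref{thm_ForstnericWold} is not available directly. The plan is to move the problem into a Stein neighbourhood of a graph, where Mergelyan-type approximation does hold, and then compose with the spike map $\sigma$ of Theorem \ref{thm_spikes} exactly as before.

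Fix $r>1$ and let $L_j$ be the radial segment from $a_j$ to $ra_j$, and put $K=\overline\D\cup\bigcup_j L_j$. As in the proof of Theorem \ref{thm_ForstnericWold}, extend $f$ continuously to $f':K\to X$ so that $L_j$ is mapped onto $\lambda_j$ with $f'(ra_j)=b_j$. The set $K$ is polynomially convex with connected complement, and $f'$ is holomorphic on the interior of $K$.

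The key step is to approximate $f'$ uniformly on $K$ by a map $h$ holomorphic on an open neighbourhood $W$ of $K$, with $h(ra_j)=b_j$ and $h(c_i)=f(c_i)$. Two conditions then follow automatically once $h$ is close enough to $f'$. First, $h(\overline U_j\cup L_j)\subset V_j$, after shrinking $U_j$ slightly as permitted. Second, $h(K)\subset V$.

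For this step I would use the Mergelyan theorem for manifold-valued maps on plane compacts, following Poletsky and Forstneri\v c's treatment of Mergelyan approximation for maps to manifolds. Here is the route I would try first. Take the graph $G=\{(z,f'(z)):z\in K\}\subset\C\times X$. By Siu's theorem (in the version for graphs of continuous maps holomorphic on the interior, on Mergelyan-type compacts), $G$ has a Stein neighbourhood $\Omega$. Embed $\Omega$ into some $\C^N$, and choose a holomorphic retraction $\rho$ from a neighbourhood of $\Omega$ in $\C^N$ onto $\Omega$. Apply the vector-valued Mergelyan theorem with interpolation to the map $z\mapsto(z,f'(z))$, regarded as a map into $\C^N$. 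Then compose with $\rho$ and project to $X$. The interpolation conditions at the finitely many points are restored by a standard correction, either by a small holomorphic perturbation along a dominating spray, or by interpolating in $\C^N$ and noting that $\rho$ fixes $\Omega$.

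I expect this to be the main obstacle. The difficulty is that $f'$ is only continuous on $\partial\D$ and on the arcs, so one must check that the Stein neighbourhood result applies to $G$. If that causes trouble, a fallback is to first replace $f$ by $f(t\,\cdot)$ for $t<1$ close to $1$. This is uniformly close to $f$ on $\overline\D$, because $f$ is uniformly continuous into the complete metric space $X$. The new map is holomorphic on a neighbourhood of $\overline\D$, at the cost of slightly changing the values $f(c_i)$ and $f(a_j)$. Those values are then fixed by adjusting the arcs $\lambda_j$ near their start, and by using interpolation to restore $f(c_i)$.

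Once $h$ is in hand, the remaining argument copies the proof of Theorem \ref{thm_ForstnericWold}. Shrink $W$ to be relatively compact, and use the Riemannian distance $d$ on $X$. Uniform continuity of $h$ on $\overline W$ gives $\delta>0$ such that $|z-w|<\delta$ implies $d(h(z),h(w))<\epsilon/2$. Choose neighbourhoods $W_j$ of $\overline U_j\cup L_j$ with $h(\overline W_j)\subset V_j$, and with $h(W)\subset V$. Theorem \ref{thm_spikes} then supplies a map $\sigma$ with the following properties: $\sigma(\overline\D)\subset W$, $\sigma$ is $\delta$-close to $\id$ off $\bigcup U_j$, $\sigma(a_j)=ra_j$, $\sigma(c_i)=c_i$, and $\sigma(\overline U_j)\subset W_j$. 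Set $g=h\circ\sigma$. The triangle inequality for $d$ gives $d(g,f)<\epsilon$ on $\overline\D\setminus\bigcup U_j$, and all the other stated properties follow directly.
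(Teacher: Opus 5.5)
Your proposal is correct and follows essentially the paper's own proof. You extend $f$ to $f'$ on $K=\overline\D\cup\bigcup_j L_j$ and take a Stein neighbourhood of the graph of $f'$. The paper gets this from Poletsky's theorem on Stein neighbourhood bases of such graphs; Siu's theorem, which concerns Stein subvarieties, is not the right citation here. You then embed that neighbourhood in some $\C^N$, apply Mergelyan's theorem with interpolation at $ra_1,\dots,ra_k,c_1,\dots,c_s$, compose with a holomorphic retraction and the projection to $X$, and finish with the spike map $\sigma$ exactly as in Theorem \ref{thm_ForstnericWold}. Your second option for the interpolation, interpolating in $\C^N$ at points of the embedded Stein neighbourhood so that the retraction fixes them, is precisely what the paper does. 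So neither the spray correction nor the $f(t\,\cdot)$ fallback is needed.
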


\begin{proof}
	Define a continuous extension $f' : K \to X$ of $f$ as in the proof of Theorem \ref{thm_ForstnericWold}. By a result of Poletsky \cite{Poletsky2013} (see also the proof of Theorem 1.4 in \cite{Forstneric2019}), the graph $\Gamma(f')  = \{(z,f'(z)):z \in K\}\subset \C \times X$ has a basis of Stein neighbourhoods in $\C \times X$. Let $T$ be one such Stein neighbourhood, and let $\Psi : T \to \C^N$ be a proper holomorphic embedding, for some $N \in \N$. Let $W$ be a tubular neighbourhood of $\Psi(T)$ with a holomorphic retraction $\kappa : W \to \Psi(T)$.
	We apply Mergelyan's theorem with interpolation at the points $ra_1,\dots,ra_k, c_1,\dots,c_s$ to the map $z \mapsto \Psi(z, f'(z))$, giving an entire map $\sigma : \C \to \C^N$. If the approximation is sufficiently good then $\sigma(K) \subset W$ and restriction to a sufficiently small relatively compact neighbourhood $U$ of $K$ ensures that $\sigma(U) \subset W$. Applying $\kappa$ followed by $\Psi^{-1}$ and then the projection $\pi_X : \C \times X \to X$ yields the holomorphic map $h = \pi_X \circ \Psi^{-1} \circ \kappa \circ \sigma : U \to X$ which has all of the necessary properties (assuming the approximation was sufficiently close) for the rest of the proof to be completed as in Theorem \ref{thm_ForstnericWold}.
\end{proof}

\end{document}